\newtheorem{thm}{Theorem}
\newtheorem{prop}{Proposition}%[section]
\newtheorem{cor}[prop]{Corollary}
\theoremstyle{definition}
\newtheorem{df}[prop]{Definition} %[subsection]
\theoremstyle{remark}
\newtheorem{rmk}[prop]{Remark} %[subsection]
\def\mrm#1{{\mathrm{#1}}}
\def\cl#1{{\mathcal{#1}}}
\def\ul#1{{\underline{#1}}}
\newcommand{\R}{{\mathbb{R}}}
\newcommand{\Z}{{\mathbb{Z}}}
\newcommand{\C}{{\mathbb{C}}}
\newcommand{\bK}{{\mathbb{K}}}
\newcommand{\HH}{{\mathbb{H}}}
\newcommand{\bF}{{\mathbb{F}}}
\newcommand{\bs}{\bigskip}
\newcommand{\del}{\partial}
\newcommand{\sm}[1]{C^\infty(#1)}
\newcommand{\cL}{\mathcal{L}}
\newcommand{\til}[1]{\widetilde{#1}}
\newcommand{\om}{\omega}
\newcommand{\cA}{\mathcal{A}}
\newcommand{\cD}{\mathcal{D}}
\newcommand{\cP}{\mathcal{P}}
\DeclareMathOperator{\id}{\mathrm{id}}
\DeclareMathOperator{\supp}{\mathrm{supp}}
\DeclareMathOperator{\Ham}{\mathrm{Ham}}
\DeclareMathOperator{\ima}{\mathrm{im}}
\def\H2{H^{(2)}}
\newcommand{\esemail}{shelukhin@dms.umontreal.ca}
\begin{document}
	
	% and $C^0$ symplectic topology

%\title{On a conjecture of Viterbo}
%\title{Flux, covers, and Viterbo conjecture for $T^n.$}

\title{Symplectic cohomology and a conjecture of Viterbo}

%\date{\today}

%\author{Asaf Kislev}
%\address{Asaf Kislev, School of Mathematical Sciences, Tel Aviv University, Israel}
%\email{\akemail}

\author{Egor Shelukhin}
\address{Egor Shelukhin, Department of Mathematics and Statistics,
	University of Montreal, C.P. 6128 Succ.  Centre-Ville Montreal, QC
	H3C 3J7, Canada}
\email{\esemail}

\bibliographystyle{abbrv}

\begin{abstract}
	
%In 2007 Viterbo has conjectured 	

We identify a new class of closed smooth manifolds for which there exists a uniform bound on the Lagrangian spectral norm of Hamiltonian deformations of the zero section in a unit cotangent disk bundle, settling a well-known conjecture of Viterbo from 2007 as the special case of $T^n.$ This class of manifolds is defined in topological terms involving the Chas-Sullivan algebra and the BV-operator on the homology of the free loop space, contains spheres and is closed under products. We discuss generalizations and various applications. 
	
%This settles a conjecture of Viterbo from 2007 as the special case of $T^n.$	
%\red{references everywhere: for math, for preliminaries, and for history\\

%\red{1. SH, HF, with filtrations \\
%2. spectral norm - more details or references?\\
%3. TQFT and actions\\
%4. maximum principle in TQFT\\
%5. A little argument in Theorem D.\\
%6. Improve $\gamma_{pt}$ to $\gamma_{alg}.$
%}
%4. Theorem D - more (or less?) details\\}

%\red{Theorem D}\\

%\red{Theorem C?}\\
	
%	We prove a conjecture of Viterbo from 2007 on the existence of a uniform bound on the Lagrangian spectral norm of Hamiltonian deformations of the zero section in unit cotangent disk bundles of the torus $T^n.$ Our proof uses elements of Abouzaid's version of Fukaya's trick to study Floer homology under Lagrangian deformations with flux, Viterbo's generating function approach to study the behavior of suitable barcodes under covering maps, as well as the author's decompactification method to pass from bounds on boundary depth to bounds on the spectral norm.
\end{abstract}

%We discuss a generalization and give applications.

\subjclass[2010]{53D12, 53D40, 37J05}

\maketitle

\tableofcontents

\section{Introduction}\label{sec:intro}

In this paper we prove a well-known conjecture of Viterbo from $2007$ \cite[Conjecture 1]{Viterbo-homog} on a uniform bound on the spectral norm $\gamma(L',L)$ of Lagrangian submanifolds, Hamiltonian isotopic to the zero section, in the unit disk cotangent bundle $D^*L$ of the standard torus $L = T^n,$ taken with respect to a fixed Riemannian metric $g.$ This is a far-reaching non-linear generalization of the elementary estimate \[\gamma(df,L) = \max f - \min f \leq \left|\left|df\right|\right| \cdot \mrm{diam}(L,g)\] for each function $f \in \sm{L,\R},$ that despite considerable interest \cite{Viterbo-homog, MonznerVicheryZapolsky,MonznerZapolsky,SeyfaddiniC0Limits,HLS-coisotropic,S-Zoll,Kha-diam,BC-private} was completely open for all $n>1,$ after the recent work \cite{S-Zoll} establishing the case $n=1$. In fact, our methods yield a stronger and more general result, as they work for a large class of closed connected manifolds $L,$ and apply to arbitrary exact Lagrangian submanifolds in $D^*L.$ We remark that ideas and results of this paper have already found applications in Riemannian geometry \cite{HOS} and in Hamiltonian PDE \cite{VicheryRoos}. Section \ref{subsec: app} describes applications to various aspects of symplectic topology, relying on the fact that the spectral norm is a key invariant controlling many quantitative properties of Lagrangian submanifolds. Broadly speaking, the results of this paper connect the a priori disparate fields of quantitative symplectic topology and algebraic topology of loop spaces in a new non-trivial way, making a contribution to both.
%in addition to those to $C^0$ symplectic topology, symplectic homogenization, and the structure of the group of Hamiltonian diffeomorphisms, described below.

 %We shall consider all homological invariants, including the spectral norm, with coefficients in a field $\bK.$ 

The main tool of the paper is the Viterbo isomorphism \cite{Viterbo-iso,SalamonWeber,AbbSchwarz,AbbSchwarz-corr,AbouzaidBook} of BV-algebras between the symplectic cohomology of the cotangent bundle and the homology of the loop space of the base. It is combined with a quantitative study of a generalization of the operations on symplectic cohomology and Lagrangian Floer homology introduced and studied by Seidel and Solomon \cite{SeidelSolomon-q} in the context of mirror symmetry, whereof we provide new calculations in terms of string topology. 

%For general manifolds $L$ we usually assume $\bK= \bF_2,$ while for orientable manifolds whose second Stiefel-Whitney class vanishes on two-tori we may consider coefficients in an arbitrary field $\bK.$ All choices of manifolds and coefficients below are of this kind. We make these restrictions primarily in order to work with trivial local systems in the Viterbo isomorphism \cite{Viterbo-iso,SalamonWeber,AbbSchwarz,AbbSchwarz-corr,AbouzaidBook} of BV-algebras between the symplectic cohomology of the cotangent bundle and the homology of the loop space of the base. However we expect all methods and definitions to adapt to more general situations, yielding similar results (for example Definition \ref{def: pt invertible} below should be taken to depend on a local system on the loop space, as well as on the coefficient field). 

We start by defining a new class of manifolds to which our results apply. Fix a base field $\bK$ as a coefficient ring for homology and cohomology groups. For a closed connected smooth manifold $L$ of dimension $\dim(L) = n,$ consider the constant-loop inclusion map \[\iota: H_\ast (L) \to H_\ast (\cL L),\] and the evaluation map \[ev: H_\ast(\cL L) \to H_\ast (L),\] between its homology and the homology of the free loop space $\cL L$ of $L.$ Given a homogeneous class $a \in H_\ast (\cL L),$ let \[m_a: H_\ast (\cL L) \to H_{\ast+|a|-n+1} (\cL L)\] be the right Chas-Sullivan \cite{ChasSullivan} string bracket $[-,a]$ with $a.$ We recall that the string bracket is given in terms of the Chas-Sullivan product $\ast,$ and the BV-operator \[\Delta: H_\ast (\cL L) \to H_{\ast+1} (\cL L).\] It is essentially the $\Delta$-differential of the product: for homogeneous elements $a,b \in H_{n-\ast}(\cL L),$ \[[b,a] = (-1)^{|b|}(\Delta(b\ast a) - \Delta(b)\ast a - (-1)^{|b|} b \ast \Delta(a)).\] This bracket, together with the product, forms the structure of a Gerstenhaber algebra on $H_{n-\ast}(\cl LL).$ In terms of these operations, the main operator that we consider in this paper is \[P_a: H_*(L) \to H_{*+|a|-n+1}(L)\] \[P_a = ev \circ m_a \circ \iota.\] 
%\[Q_a: H_*(L) \to H_*(L)\] \[Q_a = ev \circ m_a \circ \iota.\] 

%Let  be the BV-operator

%and \[[-,-]: H_\ast (\cL L) \otimes H_{\ast} (\cL L) \to H_{\ast} (\cL L)\] be the string bracket. Recall that it is given by the BV operator and 

%$a_1,\ldots,a_N, b_1,\ldots,b_{N'} \in H_\ast (\cL L)$ 

%and a composition $P$ of 
%$P_{a_1},\ldots, P_{a_N}, Q_{b_1},\ldots, Q_{b_{N'}}$ in a certain order 

\begin{df}\label{def: pt invertible} We call a closed connected smooth manifold $L$ {\em string point-invertible} over $\bK$ if it is $\bK$-orientable and there exists a collection of classes $a_1,\ldots,a_N \in H_\ast (\cL L)$ such that the composition $P = P_{a_N} \circ \ldots \circ P_{a_1}$ satisfies \[[L] = P ([pt]),\] where $[L] \in H_n(L)$ is the fundamental class and $[pt] \in H_0(L)$ is the class of the point. Reformulated more abstractly, $[L] \in \cP([pt]) = \{ P([pt])\,|\, P \in \cP \},$ where $\cP$ is the subalgebra of $Hom(H_*(L),H_*(L))$ generated by $\{P_a\,|\, a \in H_*(\cL L) \}.$ 
\end{df}

\begin{rmk} We note the following three points regarding Definition \ref{def: pt invertible}.
	
\begin{enumerate}[label = \roman*.]  
			
\item Set $H_*(\cL)^+ = \ker(ev: H_*(\cL L) \to H_*(L)).$ It is easy to see that we may, without loss of generality, restrict $a_1,\ldots,a_N$ in Definition \ref{def: pt invertible} to lie in $H_*(\cL)^+$ and replace $\cP$ with its subalgebra $\cP^+$ generated by $\{P_a\,|\, a \in H_*(\cL L)^+ \}.$ Indeed, it is enough to consider homogeneous elements $a,$ in which case $P_a: H_*(L) \to H_{*+|a|-n+1}(L)$ is a homogeneous operator. Further, for all $b \in \iota(H_*(L)),$ $P_b = 0$ since $\Delta = 0$ on $\iota(H_*(L)),$  and $\iota, ev$ are maps of algebras, $H_*(L)$ being endowed with the intersection product. Hence we may correct each homogeneous element $a \in H_*(\cL L)$ by $a_0 = \iota \circ ev(a)$ to obtain the homogeneous element $a' = a-a_0 \in H_*(\cl L)^+,$ with the property that $P_{a'} = P_{a}.$ Hence $[L] = P_{a_N} \circ \ldots \circ P_{a_1} ([pt]),$  $a_1,\ldots,a_N \in H_*(\cl L)$ if and only if $[L] = P_{a'_N} \circ \ldots \circ P_{a'_1} ([pt]),$ with $a'_1,\ldots,a'_N \in H_*(\cl L)^+.$

%with $N$ minimal,

\item For a class $a \in H_*(\cL L)$ we may consider the operator $Q_a: H_*(L) \to H_*(L),$ given by $Q_a = ev \circ m'_a \circ \iota,$ where $m'_a$ is the Chas-Sullivan product by $a.$ The technical arguments in this paper apply to this simpler map, however since $ev$ and $\iota$ are maps of algebras, and $ev \circ \iota = \id,$ we observe that for $x \in H_*(L),$ $Q_a = ev \circ m_a \circ \iota(x) = ev(a \ast \iota(x)) = ev(a) \ast ev\circ \iota( x) = ev(a) \ast x.$ Therefore $Q_a$ is the multiplication operator by $ev(a) \in H_*(L)$ with respect to the intersection product on $H_*(L).$ In particular it does not increase degree. Therefore, while adding the operations $Q_b,$ $b \in H_*(\cL L),$ to Definition \ref{def: pt invertible} may theoretically be useful, in practice it seems to have little effect. 

\item Note that if $P_a: H_*(L) \to H_{*+|a|-n+1}(L)$ increases degree, then the homological degree of $a \in H_*(\cl LL)$ satisfies $|a| \geq n.$ 

%Furthermore for $a,b \in H_*(\cL),$ $x \in H_*(L),$ \[P_{a}\circ Q_{b} (x) = ev  \circ m_a \circ \iota (ev(b) \ast x) = ev \circ \Delta ((a \ast \iota(ev(b)) \ast \iota(x)).\] Hence $P_a \circ Q_b = P_{a \ast \iota(ev(b))},$ and therefore adding the operations $Q_b,$ $b \in H_*(\cL L),$ to Definition \ref{def: pt invertible} has no effect. 

	\end{enumerate}
\end{rmk}

%\red{Can one do better with Gerstenhaber?}
By a result of Menichi \cite{Menichi-spheres} the class of string point-invertible manifolds contains spheres of odd dimension $S^{2m+1},$ $m \geq 0,$ with arbitrary coefficients, and $S^{2}$ with coefficients in $\bF_2.$ A minor modification of the argument of Menichi for $S^2$ shows that the even-dimensional spheres $S^{2m}, m \geq 1$ are in this class, with $\bF_2$ coefficients. Furthermore, we have the following general structural result for this class.

\begin{prop} \label{prop: prod}
	The class of string point-invertible manifolds over a fixed field $\bK$ is closed under products.
\end{prop}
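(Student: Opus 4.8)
The plan is to show that if $L_1,L_2$ are string point-invertible over $\bK$, with $\dim L_i=n_i$, then so is $L=L_1\times L_2$, of dimension $n=n_1+n_2$. Orientability of $L$ over $\bK$ is immediate, and under the Künneth isomorphism $H_*(L)\cong H_*(L_1)\otimes H_*(L_2)$ (valid since $\bK$ is a field) one has $[L]=[L_1]\otimes[L_2]$ and $[pt_L]=[pt_1]\otimes[pt_2]$. The first task is to record how string topology behaves under products. The canonical identification $\cL L\cong\cL L_1\times\cL L_2$ and Künneth give $H_*(\cL L)\cong H_*(\cL L_1)\otimes H_*(\cL L_2)$, under which $\iota_L=\iota_1\otimes\iota_2$, $ev_L=ev_1\otimes ev_2$, the Chas-Sullivan product $\ast_L$ is the graded tensor product of $\ast_1$ and $\ast_2$ (the figure-eight space of a product being the product of the figure-eight spaces, compatibly with the Thom collapse), and the BV operator is $\Delta_L=\Delta_1\otimes\id\pm\id\otimes\Delta_2$ with the usual Koszul sign (the circle acts diagonally on $\cL L_1\times\cL L_2$, and $d_*[S^1]=[S^1]\otimes[pt]+[pt]\otimes[S^1]$ for the diagonal $d\colon S^1\to S^1\times S^1$). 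I will also use the standard facts that each $\iota_i$ and $ev_i$ is a unital ring homomorphism (for the intersection and Chas-Sullivan products), that $ev_i\circ\iota_i=\id$, and that $\Delta_i\circ\iota_i=0$ since constant loops are $S^1$-fixed.

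The crux is the following claim: for homogeneous $a\in H_*(\cL L_1)$ the class $a\otimes\iota_2([L_2])\in H_*(\cL L)$ satisfies $P_{a\otimes\iota_2([L_2])}=\pm\,P_a\otimes\id_{H_*(L_2)}$ as an operator on $H_*(L)=H_*(L_1)\otimes H_*(L_2)$, and symmetrically $P_{\iota_1([L_1])\otimes b}=\pm\,\id_{H_*(L_1)}\otimes P_b$ for $b\in H_*(\cL L_2)$. To prove the first, evaluate $P_{a\otimes\iota_2([L_2])}=ev_L\circ m_{a\otimes\iota_2([L_2])}\circ\iota_L$ on a decomposable class $x_1\otimes x_2$, so that $\iota_L(x_1\otimes x_2)=\iota_1(x_1)\otimes\iota_2(x_2)$, and expand $m_c(y)=[y,c]=(-1)^{|y|}\bigl(\Delta(y\ast c)-\Delta(y)\ast c-(-1)^{|y|}y\ast\Delta(c)\bigr)$ for $c=a\otimes\iota_2([L_2])$ and $y=\iota_1(x_1)\otimes\iota_2(x_2)$. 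The term $\Delta_L(y)$ vanishes by $\Delta_i\circ\iota_i=0$; using the product formula for $\Delta_L$, the identity $\iota_2(x_2)\ast_2\iota_2([L_2])=\iota_2(x_2)$ (as $\iota_2$ is a unital ring map and $[L_2]$ is the $\ast_2$-unit), and $\Delta_2\circ\iota_2=0$ once more, every surviving summand is supported in the first tensor factor, and collecting them reproduces $\pm\,m_a(\iota_1(x_1))\otimes\iota_2(x_2)$. Applying $ev_L=ev_1\otimes ev_2$ together with $ev_2\circ\iota_2=\id$ gives $\pm\,P_a(x_1)\otimes x_2$. The only delicate point is the sign, which reduces to checking that the Koszul signs created by moving $\iota_2(x_2)$ past $a$ and past $\Delta_1(a)$ differ by exactly $(-1)^{|x_2|}$ — true because $\Delta$ raises degree by one.

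Granting the crux, the proof concludes quickly. By hypothesis choose $a^{(1)}_1,\dots,a^{(1)}_N\in H_*(\cL L_1)$ with $P^{(1)}:=P_{a^{(1)}_N}\circ\cdots\circ P_{a^{(1)}_1}$ satisfying $P^{(1)}([pt_1])=[L_1]$, and $a^{(2)}_1,\dots,a^{(2)}_M\in H_*(\cL L_2)$ with $P^{(2)}:=P_{a^{(2)}_M}\circ\cdots\circ P_{a^{(2)}_1}$ satisfying $P^{(2)}([pt_2])=[L_2]$. Set $b_i=a^{(1)}_i\otimes\iota_2([L_2])$ and $c_j=\iota_1([L_1])\otimes a^{(2)}_j$ in $H_*(\cL L)$. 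Since a composite of operators of the form $R\otimes\id$ (resp.\ $\id\otimes R$) is again of that form, the crux yields $P:=P_{b_N}\circ\cdots\circ P_{b_1}\circ P_{c_M}\circ\cdots\circ P_{c_1}=\pm\,(P^{(1)}\otimes\id)\circ(\id\otimes P^{(2)})=\pm\,P^{(1)}\otimes P^{(2)}$, hence $P([pt_L])=\pm\,P^{(1)}([pt_1])\otimes P^{(2)}([pt_2])=\pm\,[L_1]\otimes[L_2]=\pm\,[L]$. As $a\mapsto P_a$ is $\bK$-linear, $\cP$ is closed under scaling, so $\mp P\in\cP$ realizes $[L]=(\mp P)([pt_L])$, and $L$ is string point-invertible over $\bK$. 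The main work lies in the first paragraph — establishing the product formulas for $\ast$ and $\Delta$ with correct signs, which is well known but must be done carefully — and in the sign bookkeeping of the crux; the conceptual content is simply that $\iota_2([L_2])$ is a unit and $\Delta$ annihilates constant loops, so the second factor remains inert throughout.
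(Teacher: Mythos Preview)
Your proof is correct and follows essentially the same approach as the paper's: both use the K\"unneth decomposition of $H_*(\cL(L_1\times L_2))$, the tensor-product formulas for the Chas--Sullivan product and the BV operator, and the classes $a\otimes\iota([L_2])$ and $\iota([L_1])\otimes b$ to reduce $P$ on the product to $P_a\otimes\id$ and $\id\otimes P_b$ up to sign, fixing the sign at the end by rescaling. The only cosmetic difference is that you state the operator identity $P_{a\otimes\iota_2([L_2])}=\pm P_a\otimes\id$ once and for all, whereas the paper tracks the specific chain of classes $[pt]\otimes[pt]\to\cdots\to[L_1]\otimes[L_2]$ step by step; both arguments handle signs at the same level of care.
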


%Firstly, by a result of Westerland \cite{Westerland-string-sphere-proj}, this class contains $\{\R P^n, S^n, \C P^n, \HH P^n\}$ for all $n \geq 1,$ where for $\R P^n$ we should restrict coefficients to $\bF_2,$ by Tamanoi \cite{Tamanoj-string} it contains the complex Stiefel manifolds $V_{n+1-k}(\C^{n+1}) \cong SU(n+1)/SU(k)$ of orthonormal $(n+1-k)$-frames in $\C^{n+1}$ for all $n \geq 0,$ $0\leq k \leq n,$ by Hepworth \cite{Hepworth-string}, it contains the compact connected Lie groups, and by Berglund and B\"{o}rjeson \cite{BorgBer-string}, in characteristic zero it contains $(n-1)$-connected manifolds of dimension at most $3n-2,$ $n \geq 2.$ Furthermore, by a result of Vaintrob \cite{MitkaV-string}, this class does {\em not} contain the closed surface $\Sigma_g$ of genus $g,$ for each $g > 1.$ Finally, in Section \ref{subsec: proof prod} we show the following abstract result.

In particular, the $n$-torus $T^n$ is string point-invertible over any field $\bK.$ To verify the definition one can take (the image under coefficient change to $\bK$ of) the sequence $a_1,\ldots,a_n$ of positive generators of $H_n(\cL_{e_j} L;\Z) \cong \Z$ for free homotopy classes of loops $e_1,\ldots,e_n$ corresponding to a positively oriented basis of $\Z^n.$ The main result of this paper is the following.

\begin{thm}\label{thm:Vit-prod}
Let $L$ be string point-invertible over a field $\bK.$ Let $g$ be a Riemannian metric on $L.$ Then there exists a constant $C(g,L;\bK)$ such that for all exact Lagrangian submanifolds $L_0, L_1$ containted in the unit codisk bundle $D^*_g L \subset T^*L,$ the spectral norm of the pair $L_0,L_1$ satisfies \[\gamma(L_0,L_1;\bK) \leq C(g,L;\bK).\]  
\end{thm}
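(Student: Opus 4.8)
The plan is to reduce the statement to a uniform bound for a single exact Lagrangian, and then to transport the relation $[L] = P([pt])$ defining string point-invertibility into \emph{filtered} Lagrangian Floer theory through the Viterbo isomorphism, where it forces two spectral invariants to lie within a bounded distance of each other.

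First I would invoke the triangle inequality for the Lagrangian spectral norm together with the fact that the zero section $0_L$ is itself an exact Lagrangian contained in $D^*_g L$, giving $\gamma(L_0, L_1; \bK) \le \gamma(L_0, 0_L; \bK) + \gamma(0_L, L_1; \bK)$; so it suffices to bound $\gamma(L', 0_L; \bK)$ uniformly over exact Lagrangians $L' \subset D^*_g L$. For exact Lagrangians in a cotangent bundle there is no bubbling, and by the work of Abbondandolo--Schwarz and of Kragh and Abouzaid one has $HF^*(L', 0_L; \bK) \cong H^*(L; \bK)$, so the fundamental class $[L]$, the point class $[pt]$, and their spectral invariants $c([L])$ and $c([pt])$ in the filtered complex $CF(L', 0_L; \bK)$ are well defined, with $\gamma(L', 0_L; \bK) = c([L]) - c([pt]) \ge 0$ --- the Floer-theoretic form of $\gamma(df, 0_L) = \max f - \min f$ from the introduction. (One may first handle the transparent case $L' = \phi(0_L)$ for a compactly supported Hamiltonian $\phi$ and then pass to general exact $L'$ via its Floer-categorical identification with $0_L$, or a limiting argument; the constants remain controlled by $(g, L; \bK)$ throughout.)

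The heart of the matter is to realize, for each homogeneous $a \in H_*(\cL L)$, the operator $P_a = ev \circ m_a \circ \iota$ by a chain-level operation $\widehat{P}_a$ on $CF(L', 0_L; \bK)$ enjoying two properties. First, \emph{uniform filtration control}: $\widehat{P}_a$ sends $CF^{\le s}(L', 0_L)$ into $CF^{\le s + E(a)}(L', 0_L)$ for a constant $E(a)$ depending only on $a$, $g$, $L$ and $\bK$ and not on $L'$. Second, \emph{compatibility on homology}: under the PSS identifications $HF(L', 0_L; \bK) \cong H^*(L; \bK)$ and Poincaré duality the induced map is exactly $P_a$. I would construct $\widehat{P}_a$ in the spirit of Seidel--Solomon: count pseudoholomorphic disks with boundary on $0_L \cup L'$ carrying one interior marked point constrained to the class of $SH^*(T^*L)$ corresponding to $a$ under the Viterbo isomorphism, equipped with the auxiliary ($S^1$-rotation) marked-point data that implements the BV operator $\Delta$ and thereby upgrades the plain module action of $SH^*(T^*L)$ on Lagrangian Floer homology to the string-bracket operation $m_a$. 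The filtration control then follows because such disks have energy bounded in terms of the action of $a$ --- a finite quantity for fixed $a$, governed by the lengths of the finitely many closed geodesics of $(L, g)$ entering a chain representative --- and because $L'$ enters only as a Lagrangian boundary condition inside the fixed compact region $D^*_g L$, making all energy and compactness estimates uniform in $L'$.

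The compatibility statement is the step I expect to be the main obstacle; it is a genuinely new string-topological computation of these Lagrangian analogues of the Seidel--Solomon operations. One must combine the known compatibility of the Viterbo isomorphism with the pair-of-pants and Chas--Sullivan products and with the BV operators (Abbondandolo--Schwarz, Abouzaid) with the interaction between the open-closed map, the $SH^*(T^*L)$-module structure on $HF(L', 0_L)$, and the geometry of the zero section, in order to verify that inserting $a$ at an interior point, applying the $\Delta$-enhancement, and evaluating through $0_L$ reproduces precisely $ev \circ m_a \circ \iota$; transversality for the relevant moduli spaces with a possibly wild exact boundary condition $L'$ is a further, more routine, technical point. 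Granting this, the theorem follows quickly: choose $a_1, \dots, a_N \in H_*(\cL L)$ witnessing that $L$ is string point-invertible, so $P_{a_N} \circ \cdots \circ P_{a_1}([pt]) = [L]$. The composite $\widehat{P}_{a_N} \circ \cdots \circ \widehat{P}_{a_1}$ sends $CF^{\le s}(L', 0_L)$ into $CF^{\le s + D}(L', 0_L)$ with $D = E(a_1) + \cdots + E(a_N)$ independent of $L'$, and induces $P_{a_N} \circ \cdots \circ P_{a_1}$ on homology; applying it to a cycle representing $[pt]$ at filtration level $c([pt]) + \varepsilon$ produces a cycle representing $[L]$ at level $c([pt]) + D + \varepsilon$. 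Hence $c([L]) \le c([pt]) + D$, so $\gamma(L', 0_L; \bK) = c([L]) - c([pt]) \le D$, and with the first step $\gamma(L_0, L_1; \bK) \le 2D =: C(g, L; \bK)$, as claimed.
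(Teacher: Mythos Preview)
Your proposal is correct and follows essentially the same route as the paper: realize each $P_a$ as the Seidel--Solomon operation $\tilde{\phi}^1_{(L_0,L_1),a}$ corrected by equivariant primitives (this is where $a \in \ker(ev)$, i.e.\ $r_{L'}(\alpha)=0$, enters to make it a chain map with uniform filtration shift bounded by the symplectic-cohomology spectral invariant $c(\alpha,D,S)$), and verify the homological compatibility with $ev \circ m_a \circ \iota$ via the Viterbo BV-isomorphism---this last computation is indeed the paper's main technical step. The paper works directly with an arbitrary pair $L_0,L_1$ rather than reducing to the zero section via the triangle inequality, but this is cosmetic.
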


\begin{rmk}
By the triangle inequality for the spectral norm, it is enough to prove the above statement for $L_1 = L,$ the zero section in $D^*_g L.$
\end{rmk}

%\red{discuss sharpness - easy to see that the bound for $T^n$ is sharp.}

This statement was previously known for $\bK = \bF_2,$ and $L \in \{\R P^n, \C P^n, \HH P^n, S^n \,:\, n\geq 1\}$ by \cite{S-Zoll}, essentially in the case when $L_1$ is Hamiltonianly isotopic to the zero section, and $L_0 = L.$ In particular the case of $T^n$ for $n>1$ has remained completely open. We note that by the examples in Section \ref{subsec: examples}, Theorem \ref{thm:Vit-prod} is rather complementary to the result of \cite{S-Zoll}. Furthermore, while quite a few manifolds including surfaces of higher genus are not string point-invertible, and hence Theorem \ref{thm:Vit-prod} does not apply as such, there is an ongoing work \cite{BC-private} proving related results for bases given by arbitrary closed connected manifolds. 

%Finally, compare: $\gamma(df,L) = \max f - \min f \leq \left|\left|df\right|\right| \cdot \mrm{diam}(L,g)$ for $f \in \sm{L,\R}.$ 

The strategy of the proof of Theorem \ref{thm:Vit-prod} differs significantly from that of \cite{S-Zoll}, in particular in that it does not make use of compactification arguments, relying instead on a kind of "wrong way Lusternik-Schnirelmann inequalities". The main idea is threefold: first, a homogeneous class $a \in H_{n-\ast}(\cl LL)^+$ corresponds by the Viterbo isomorphism \cite{Viterbo-iso,SalamonWeber,AbbSchwarz,AbbSchwarz-corr,AbouzaidBook} to a class $\alpha \in  SH^*(\cl LL)$ (the latter computed with suitable background class) with the property that $r_{L'}(\alpha) = 0 \in HF^*(L',L')$ for each exact Lagrangian $L' \subset T^*L,$ where $r_{L'}: SH^*(\cl LL) \to HF^*(L',L')$ is the natural closed-open restriction map (by \cite{FukayaSeidelSmith-cotangentsc,Kragh-nearby,Abouzaid-nearbyMaslov,AK-simplehomotopy} each such $L'$ has vanishing Maslov class, is Spin relatively to the above background class, and endowed with suitable Spin structure is Floer-theoretically equivalent to the zero section $L$). Second, working up to $\epsilon > 0,$ given that $L_0,L_1 \subset D = D^*_g L,$ a Liouville domain with contact boundary $S = \partial D = S^*_g L,$ the work of Seidel and Solomon \cite{SeidelSolomon-q} gives an operation $HF^*(L_0,L_1) \to HF^{*+|\alpha|-1}(L_0,L_1),$ which raises the action filtration by no more than a symplectic-homological spectral invariant $c(\alpha,D,S)$ corresponding to the class $\alpha$ and the domain $D.$ Finally, using further TQFT operations for Lagrangian Floer cohomology and symplectic cohomology \cite{AbouzaidBook, AbbSchwarz}, we calculate that under the Floer-theoretic equivalence with the zero section, Poincar\'{e} duality $H^*(L) \cong H_{n-\ast}(L),$ and the Viterbo isomorphism, this operation is given by $P_a:H_{\ast}(L) \to H_{\ast +|a|-n +1}(L).$ Therefore, in view of string point-invertibility, assuming for simplicity that all $P_{a_j},$ $1 \leq j \leq N,$ increase degree, which tends to happen in practice, by successively writing inequalities that bound the Lagrangian spectral invariants of classes of higher homological degree in terms of those of classes of lower homological degree, we arrive to a uniform upper bound on the spectral distance $\gamma(L_0,L_1; \bK),$ finishing the proof.

\begin{rmk}\label{prop: bound} A few remarks on Theorem \ref{thm:Vit-prod} are in order. \begin{enumerate}[label = \roman*.]
\item It is not necessary that our Weinstein domain be $D^*_g L$ for a Riemannian metric $g.$ In fact the same result holds for any Weinstein domain $D$ containing $L$ with completion given by $T^*L.$ In this case the upper bound will be given in terms of a constant $c(D,L;\bK).$ For example $D$ may be given by a Finsler metric, or an optical domain: one that is strictly fiberwise star-shaped, and has a smooth boundary. Finally, approximating general, not necessarily smooth, strictly fiberwise starshaped domains by ones with smooth boundary, we obtain a uniform bound in that case as well.
\item In fact $C(g,L;\bK)$ in Theorem \ref{thm:Vit-prod} can be chosen to be equal to a certain sum of spectral invariants relative to the domain $D$ with boundary $S,$ corresponding to any $N$-tuple $a_N,\ldots,a_1 \in H_*(\cl LL)$ as in Defintion \ref{def: pt invertible}. See Equation \eqref{eq: bound on spectral norm}. Furthermore, it is easy to see that the spectral invariants $c(a,D,S)$ are continuous in the Banach-Mazur distance with respect to the natural $\R_{>0}$-action on $T^*L$ \cite{VukasinJun,PolterovichRSZ-book} (see \eqref{eq: Jun-Vukasin Lipschitz}), and hence extend for example to the non-smooth strictly fiberwise star-shaped case. As a consequence we obtain bounds in the non-smooth fiberwise star-shaped case in terms of the extension of the spectral invariants.
\item Let $g_0$ be the standard metric of diameter $1/2$ on $S^1 = \R/\Z.$ Let $D_0 = D^*_{g_0} S^1 = [-1,1] \times S^1.$ From \eqref{eq: bound on spectral norm}, it is evident that $C(D_0,L_0;\bK)=1$ in this case. This upper bound is sharp, since for each $\epsilon > 0$ sufficiently small, it is easy to construct a Lagrangian $L'_0 \subset D_0$ Hamiltonian isotopic to $L_0=S^1$ in $D_0,$ with $\gamma(L'_0,L_0) > 1 -\epsilon,$ and the intersection $L'_0 \cap L_0$ is transverse and consists of precisely $2$ points $x,y$ of index $1$ and $0$ respectively. Consider now the stricly fiberwise star-shaped domain $D \subset T^*(T^n)$ given by \[D = (D_0)^n = [-1,1]^n \times T^n.\] It is easy to calculate that the upper bound obtained by continuity from \eqref{eq: bound on spectral norm} is in this case $C(D,L;\bK) = n.$ It is seen to be sharp by noting that $L' = (L'_0)^n$ satisfies $\gamma(L',L) = n\cdot \gamma(L'_0,L_0),$ since $L' \pitchfork L,$ and the only intersection point of $L'$ and $L$ indices $n$ and $0$ are $(x,\ldots,x),$  and $(y,\ldots, y)$ respectively.

\item We note that Theorem \ref{thm:Vit-prod} fails for general bounded Liouville domains. For example it is false for Lagrangians Hamiltonian isotopic to $L$ in plumbings of $D^*L$ with two or more cotangent disk bundles by \cite{ZapPlumbing}.
	\end{enumerate}

\end{rmk}

\subsection{String point-invertibility: examples and non-examples}\label{subsec: examples}

We discuss the size of the class of string point-invertible manifolds by describing examples and non-examples, based on known calculations of the Chas-Sullivan Gerstenhaber algebra, in addition to the results of Menichi \cite{Menichi-spheres} and Proposition \ref{prop: prod} mentioned above. These calculations turn out to be quite delicate, and to depend on the choice of coefficients, and hence so does the property of string point-invertibility.

By a result of Tamanoi \cite{Tamanoi-Stiefel}, the complex Stiefel manifolds $V_{n+1-k}(\C^{n+1}) \cong SU(n+1)/SU(k)$ of orthonormal $(n+1-k)$-frames in $\C^{n+1}$ for all $n \geq 0,$ $0\leq k \leq n,$ are string point-invertible over arbitrary coefficients, and by Menichi \cite{Menichi-double} (see related result of Hepworth \cite{Hepworth-Lie}), all compact connected Lie groups are string point-invertible, with characteristic zero coefficients. 

%Results of Westerland \cite{Westerland-spheres-proj} indicate that over $\bF_2,$ certain projective spaces, for example $\C P^n$ and $\HH P^n,$ where $n$ is odd, are also string point-invertible. Even though the results of \cite{Westerland-spheres-proj} pertain to a different BV-operator than the one we use here, since these spaces are simply connected, 

We note that as string point-invertibility depends only on the Gerstenhaber algebra structure on $H_{n-\ast}(\cl LL)$ and the evaluation and inclusion maps, at least for $L$ simply connected, by the results of Malm \cite{Malm} and Felix-Menichi-Thomas \cite{FelixMenichiThomas} or Keller \cite{Keller-derived}, as well as \cite{FelixThomas-BV} (see also the discussion in \cite{LekiliEtgu}), it depends only on the singular cochain dg-algebra $C^*(L)$ of $L,$ up to isomorphism. 

%$L$ is simply connected results of Malm \cite{Malm} and Felix-Menichi-Thomas \cite{FelixMenichiThomas} or Keller \cite{Keller-derived} imply that the Gerstenhaber algebra of interest is determined  (see also the discussion in \cite{LekiliEtgu}).
%by a result of F\'{e}lix and Thomas .  
%
% however . While this is not expected to influence the Gerstenhaber algebra structure, we have not found the suitable modified calculations in the literature. 

%Results of Westerland \cite{Westerland-spheres-proj} seem to indicate that over $\bF_2,$ certain projective spaces, for example $\C P^n$ and $\HH P^n,$ where $n$ is odd, are also string point-invertible, however these results pertain to a different BV-operator than the one we use here. While this is not expected to influence the Gerstenhaber algebra structure, we have not found the suitable modified calculations in the literature. 

By Menichi \cite{Menichi-spheres}, this class does {\em not} contain the even-dimensional spheres $S^{2m}, m\geq 1$ for coefficients of characteristic zero, for instance, and the same is true for $\C P^n, n\geq 1,$  $\HH P^n, n \geq 1$ and $\mathbb{O}P^2$ by results of Yang \cite{Yang-BV}, Chataur-Le Borgne \cite{ChataurLeBorgne-complexproj}, Hepworth \cite{Hepworth-complexproj}, and Cadek-Moravec \cite{CadekMoravec-quaternionicproj}. Moreover, by Westerland \cite{Westerland-spheres-proj} and \cite{ChataurLeBorgne-complexproj,Hepworth-complexproj,CadekMoravec-quaternionicproj} the same is true for $\C P^n, \HH P^n,$ where $n>1$ and for $\mathbb{O} P^2,$ with $\bF_2$-coefficients. This question is further explored in \cite{HOS} with arbitrary coefficients. By a result of Vaintrob \cite{MitkaV-string}, this class does not contain the closed surface $\Sigma_g$ of genus $g,$ for each $g > 1,$ and any choice of coefficients. The same is true for closed manifolds of strictly negative sectional curvature, again by \cite{MitkaV-string} or by an index argument of Tonkonog \cite{DT-private}.

\subsection{Applications}\label{subsec: app}

\subsubsection{$C^0$ continuity of the Hamiltonian spectral norm}

As observed in \cite{S-Zoll}, an argument of neck-stretching around a divisor in $M \times M^-$ that makes the Lagrangian diagonal $\Delta_{M} \subset M \times M^-$ exact, where $(M,\om)$ is a closed symplectic manifold such that Theorem \ref{thm:Vit-prod} holds for $L=M,$ and $M^-$ denotes the symplectic manifold $(M,-\om),$ allows one to prove, for example in the symplectically aspherical case, that the Hamiltonian spectral norm on $\Ham(M,\om)$ is Lipschitz in the $C^0$ norm, in a $C^0$-neighborhood of the identity. We pick one instance of such an application. The $C^0$-distance between two diffeomorphisms $\phi_0,\phi_1$ of $M$ is defined as $d_{C^0}(\phi_0,\phi_1) = \max_{x \in M} d(\phi_0(x),\phi_1(x)),$ the distance $d$ being taken with respect to a background Riemannian metric on $M.$

%\red{Change $\gamma_{pt}$ to $\gamma$ and switch weakly aspherical case to weakly monotone: use the homological perturbation argument}

\begin{cor} \label{cor: C^0 continuity}
Let $g$ be a Riemannian metric on $T^{2n},$ and $\bK$ be a field. The spectral norm \[\gamma: \Ham(T^{2n},\om_{st}) \to \R_{\geq 0}\] over $\bK$ satisfies the following. There exist constants $C,\delta > 0,$ such that \[\min \{\gamma(\phi), \delta C\} \leq C \cdot d_{C^0}(\phi,1)\] for all $\phi \in \Ham(T^{2n},\om_{st}).$ 
\end{cor}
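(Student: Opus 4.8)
The plan is to reduce the $C^0$-continuity statement on $\Ham(T^{2n},\om_{st})$ to the uniform spectral bound of Theorem \ref{thm:Vit-prod} applied to $L = T^{2n}$, viewed as the diagonal Lagrangian. First I would recall the standard ``Lagrangian suspension / graph'' correspondence: for $\phi \in \Ham(T^{2n},\om_{st})$ with Hofer-small or $C^0$-small displacement, the graph $\Gamma_\phi \subset (T^{2n} \times T^{2n}, \om_{st} \oplus (-\om_{st}))$ lies in a Weinstein neighborhood of the diagonal $\Delta \cong T^{2n}$, which is symplectomorphic to a disk cotangent bundle $D^*_g \Delta$ for an appropriate metric $g$ (depending on the fixed background metric on $T^{2n}$). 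Under this identification $\Gamma_\phi$ becomes an exact Lagrangian $L_0 \subset D^*_g\Delta$ Hamiltonian isotopic to the zero section, and the key point is that the Lagrangian spectral norm $\gamma(L_0, \Delta)$ computed in $T^*\Delta$ agrees with the Hamiltonian spectral norm $\gamma(\phi)$ computed in $T^{2n}$ (this is the usual comparison of Lagrangian and Hamiltonian spectral invariants in the aspherical case, via the PSS-type isomorphisms; here $T^{2n}$ is symplectically aspherical so there are no subtleties with quantum corrections or the background class). The content of the statement is then: as long as $\Gamma_\phi$ fits inside $D^*_g\Delta$, Theorem \ref{thm:Vit-prod} gives $\gamma(\phi) = \gamma(L_0,\Delta) \leq C(g,\Delta;\bK) =: \delta C$, a \emph{uniform} bound; combined with the (elementary, and already known) local Lipschitz estimate $\gamma(\phi) \leq C\cdot d_{C^0}(\phi,1)$ valid for $\phi$ that are $C^0$-close to the identity, one gets $\min\{\gamma(\phi),\delta C\} \leq C\cdot d_{C^0}(\phi,1)$ for \emph{all} $\phi$: if $\phi$ is $C^0$-close to $1$ the local estimate applies directly, and if it is not, then $d_{C^0}(\phi,1) \geq \delta$ so the right-hand side is at least $\delta C \geq \min\{\gamma(\phi),\delta C\}$ trivially.

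Concretely, the steps in order are: (1) fix the background metric and choose $\delta_0>0$ so small that every $\phi$ with $d_{C^0}(\phi,1) < \delta_0$ has graph $\Gamma_\phi$ contained in the chosen Weinstein neighborhood $U \cong D^*_g\Delta$ of $\Delta$; (2) recall/cite the local Lipschitz bound $\gamma(\phi) \leq C_1 \cdot d_{C^0}(\phi,1)$ for $d_{C^0}(\phi,1) < \delta_0$, which follows from energy–capacity or a direct $C^0$-estimate on spectral invariants in a Darboux–Weinstein chart (this is the part that uses $C^0$ rather than Hofer smallness, and is where the ``neck-stretching around a divisor making $\Delta$ exact'' remark of \cite{S-Zoll} is invoked if one wants the cleanest argument); (3) apply Theorem \ref{thm:Vit-prod} to $L=T^{2n}$, which is string point-invertible over any field by Proposition \ref{prop: prod} and the torus computation, to get $\gamma(L_0,\Delta;\bK) \leq C(g,\Delta;\bK)$ whenever $\Gamma_\phi \subset U$; (4) set $C = \max\{C_1, C(g,\Delta;\bK)/\delta_0\}$ and $\delta = \delta_0$ (adjusting constants so that $\delta C \geq C(g,\Delta;\bK)$), and assemble the case distinction above.

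The main obstacle, and the step deserving the most care, is step (1)–(2): making precise the identification of a $C^0$-neighborhood of $1$ in $\Ham(T^{2n})$ with exact Lagrangians in $D^*_g\Delta$ and the equality $\gamma(\phi) = \gamma(\Gamma_\phi, \Delta)$ of the two spectral norms, together with the $C^0$-Lipschitz estimate near the identity. None of this is genuinely new — it is essentially the mechanism already used in \cite{S-Zoll}, and relies on the symplectic asphericity of $T^{2n}$ to avoid any subtlety with the background/Spin class appearing in Theorem \ref{thm:Vit-prod} — but it does require checking that the Weinstein neighborhood can be taken of a definite size and that the comparison isomorphism between Hamiltonian and Lagrangian Floer homology is filtered (action-preserving up to a controlled shift). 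Once that infrastructure is in place, the deduction of Corollary \ref{cor: C^0 continuity} from Theorem \ref{thm:Vit-prod} is a short formal argument, exactly the two-case split on whether $d_{C^0}(\phi,1)$ is below or above the threshold $\delta$.
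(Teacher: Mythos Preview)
Your overall strategy --- view the graph $\Gamma_\phi$ as a Lagrangian near the diagonal $\Delta \subset T^{2n}\times (T^{2n})^-$, invoke Theorem \ref{thm:Vit-prod} for $L=\Delta\cong T^{2n}$, and finish with the case split on whether $d_{C^0}(\phi,1)$ is below a threshold --- is the same as the paper's (which simply refers to \cite[Theorem C]{S-Zoll} for the argument). So the plan is right in outline.

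There is, however, a conceptual muddle in your steps (2) and (3). You treat the local Lipschitz inequality $\gamma(\phi)\leq C_1\cdot d_{C^0}(\phi,1)$ as ``elementary and already known'' (via energy--capacity or a Darboux estimate), and then separately use Theorem \ref{thm:Vit-prod} only to get a \emph{uniform} bound. That is backwards. Energy--capacity arguments give bounds in the Hofer norm, not in $d_{C^0}$; there is no elementary $C^0$-Lipschitz estimate here. The Lipschitz bound \emph{is} the nontrivial content, and it is obtained precisely from Theorem \ref{thm:Vit-prod}, combined with two further ingredients: the neck-stretching comparison (relating $\gamma(\phi)$ computed in the closed manifold $T^{2n}$ to the Lagrangian spectral norm of $(\Gamma_\phi,\Delta)$ computed in the exact manifold $T^*\Delta$ --- this is where the ``neck-stretching around a divisor making $\Delta$ exact'' is essential, not optional), and the scaling behavior $C(r\cdot D,L;\bK)=r\cdot C(D,L;\bK)$ of the constant in Theorem \ref{thm:Vit-prod}. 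Concretely: if $d_{C^0}(\phi,1)=r<\delta$ then $\Gamma_\phi\subset U^r$, and the mechanism of Corollary \ref{cor: embedded bound} applied to the diagonal yields $\gamma(\phi)\leq C\cdot r$. Your step (3), which extracts only a uniform bound $\gamma\leq C(g,\Delta;\bK)$, is then redundant. Once you reorganize so that Theorem \ref{thm:Vit-prod} with scaling \emph{produces} the Lipschitz estimate (rather than supplementing a separate one), the argument is complete and coincides with the paper's.
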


We refer to \cite{S-Zoll} for a discussion of results of this kind, such as \cite{Seyfaddini-descent, BHS-spectrum, S-Zoll} and their applications \cite{LSV-conj, KS-bounds, BHS-spectrum}, and a proof of a similar implication \cite[Theorem C]{S-Zoll}. We observe that further such results in the setting of closed monotone symplectic manifolds that are string point-invertible as smooth manifolds are not difficult to deduce. However, for reasons of conciseness, we defer this discussion to a further publication.

\subsubsection{Quasi-morphisms on the Hamiltonian group of cotangent disk bundles}

Similarly to \cite{S-Zoll}, Theorem \ref{thm:Vit-prod} yields the existence of non-trivial homogeneous quasi-morphisms on $\Ham_c(D^*_g L)$ for $L$ string point-invertible, providing new examples of quasi-morphisms and quasi-states on compactly supported Hamiltonian diffeomorphism groups of Weinstein domains. The notion of such quasi-morphisms, which are maps to $\R$ that are additive up to a uniformly bounded error, played an important role in symplectic topology (see \cite{EntovPolterovichCalabiQM,Entov-ICM,Lanzat-qm,BEP-ball,S-Zoll} for example). This in turn has applications to the geometry of the Poisson bracket of compactly supported functions in $D^*_g L.$ We remind the reader that a quasi-morphism is called homogeneous if it is additive on all abelian subgroups, and non-trivial if it is not a homomorphism. We summarize this application as follows, and refer to \cite{S-Zoll} for its deduction from from Theorem \ref{thm:Vit-prod}.

\begin{cor}\label{cor:qm intr}
Let $L$ be string point-invertible over $\bK.$ The map $\mu: \til{\Ham}_c(D^* L) \to \R$ on the universal cover of $\Ham_c(D^*L)$ given by \[\mu([\{\phi^t_H\}]) = \lim_{k \to \infty} \frac{1}{k} c([L], H^{(k)}),\] for  $H^{(k)}(t,x) = k H(kt,x),$ $k \in \Z_{>0}$ descends to a well-defined non-zero homogeneous quasimorphism  $\mu: \Ham_c(D^* L) \to \R.$ Moreover $\mu$ vanishes on each element $\phi \in \Ham_c(D^* L)$ such that $\supp(\phi)$ is displaceable by an element of  $\Ham_c(D^*L).$ For $F,G \in C^{\infty}_c (D^*L, \R),$ the map $\zeta: C^{\infty}_c (D^*L, \R) \to \R$ by $\zeta(H) = \mu(\phi^1_H),$ satisfies \begin{equation}\label{eq:zeta pb}|\zeta(F+G) - \zeta(F) - \zeta(G)| \leq \sqrt{2 C(g,L) ||\{F,G\}||_{C^0}},\end{equation} where $\{F,G\}$ is the Poisson bracket of $F,G.$ In particular, whenever $\{F,G\} = 0,$ we obtain \[\zeta(F+G) = \zeta(F) + \zeta(G).\]
\end{cor}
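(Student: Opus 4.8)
The plan is to run the Entov--Polterovich construction of a spectral quasimorphism, but using the \emph{Lagrangian} spectral invariant of the zero section, exactly as in \cite{S-Zoll,EntovPolterovichCalabiQM}; the only new ingredient is the uniform bound supplied by Theorem~\ref{thm:Vit-prod}. Write $D = D^*L$, let $L \subset D$ be the zero section, and for $\phi \in \til{\Ham}_c(D)$ generated by a compactly supported Hamiltonian $H$ set $f(\phi) = c([L],H)$, the Lagrangian spectral invariant of the pair $(\phi(L),L)$ associated with the unit $[L] \in HF^*(L,L) \cong H^*(L;\bK)$. First I would recall its standard properties: spectrality; the Hofer--Lipschitz estimate $|f(\phi)| \leq \|H\|_{(1,\infty)}$; Lagrangian control $-\intoi \max_L H_t\,dt \leq f(\phi) \leq -\intoi \min_L H_t\,dt$; normalization $f(\id) = 0$; and subadditivity $f(\phi\psi) \leq f(\phi) + f(\psi)$, which holds because $[L]$ is a unit for the triangle product (see \cite{MonznerVicheryZapolsky,MonznerZapolsky,S-Zoll}). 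Since $D$ is a Liouville domain, the action functional underlying $HF^*(L,L)$ is single-valued, so $f$ descends to a function on $\Ham_c(D)$, and the associated Lagrangian spectral norm is $\gamma(\phi) := f(\phi) + f(\phi^{-1}) = \gamma(\phi(L),L;\bK) \geq f(\id) = 0$, the last inequality by subadditivity.

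The crux is that $\gamma$ is bounded: for every $\phi \in \Ham_c(D)$ the image $\phi(L) \subset D$ is an exact Lagrangian, so Theorem~\ref{thm:Vit-prod} gives $\gamma(\phi) \leq C := C(g,L;\bK)$. Combined with subadditivity this yields, for all $\phi,\psi$,
\[ -\gamma(\psi) \;\leq\; f(\phi\psi) - f(\phi) - f(\psi) \;\leq\; 0 \]
(the right inequality is subadditivity; the left follows from $f(\phi) \leq f(\phi\psi) + f(\psi^{-1})$), so $f$ is a quasimorphism of defect $\leq C$. Hence $k \mapsto f(\phi^k)$ is subadditive and bounded below by $-kf(\phi^{-1})$, so by Fekete's lemma the homogenization
\[ \mu(\phi) = \lim_{k\to\infty} \tfrac{1}{k} f(\phi^k) = \lim_{k\to\infty}\tfrac{1}{k} c([L],H^{(k)}) \]
exists — this is the $\mu$ of the statement, the identity $\phi^1_{H^{(k)}} = (\phi^1_H)^k$ being immediate — and is a homogeneous quasimorphism with $|\mu - f| \leq C$. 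Well-definedness on $\til{\Ham}_c(D)$ and descent to $\Ham_c(D)$ follow exactly as in \cite{S-Zoll}: the only possible obstruction lies in $\pi_1(\Ham_c(D))$, and for a lift $\tilde\phi$ of the identity the values $f(\tilde\phi^k)$ are bounded (by single-valuedness of the action functional on the exact Lagrangian $L$, together with $0 \leq \gamma(\tilde\phi^k) \leq C$), whence $\mu(\tilde\phi) = 0$.

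For non-triviality it suffices to show $\mu \not\equiv 0$, since $\Ham_c(D)$ is a perfect group (a theorem of Banyaga) and hence admits no nonzero homomorphism to $\R$. Pick $H \in C^{\infty}_c(D,\R)$ with $H|_L$ everywhere strictly negative; the control property gives $f(\phi^k_H) = c([L],kH) \geq -k\max_L H > 0$, hence $\mu(\phi^1_H) \geq -\max_L H > 0$. For the vanishing on $\phi$ with $\supp(\phi)$ displaceable by some $\psi \in \Ham_c(D)$, I would use the standard uniform estimate $|f(\phi^k)| \leq e(\supp(\phi))$ for all $k$, where $e$ denotes displacement energy — the displacing isotopy caps the relevant action window, as in \cite{S-Zoll} — so that $\mu(\phi) = \lim_k \tfrac1k f(\phi^k) = 0$; this gives the last assertion of the corollary.

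Finally, for \eqref{eq:zeta pb}: note $\zeta(H) = \mu(\phi^1_H)$ is linear under rescaling, $\zeta(sH) = s\,\zeta(H)$ for $s>0$, since $\mu$ is linear on the one-parameter subgroup $\{\phi^t_H\}$. Hence for fixed $F,G \in C^{\infty}_c(D,\R)$ it is enough to bound $\zeta(N(F+G)) - \zeta(NF) - \zeta(NG) = N(\zeta(F+G)-\zeta(F)-\zeta(G))$ and let $N \to \infty$ at a well-chosen rate. Combining (i) a quantitative Trotter product estimate $\| (\phi^{1/N}_F\phi^{1/N}_G)^N\,(\phi^1_{F+G})^{-1} \|_{\mathrm{Hofer}} = O(N^{-1}\|\{F,G\}\|_{C^0})$, (ii) the homogeneity of $\mu$ and subadditivity of $f$, (iii) the Hofer--Lipschitz estimate, and (iv) the bound $|\mu - f| \leq C$, one obtains an inequality of the shape $\zeta(F+G)-\zeta(F)-\zeta(G) \leq O(N^{-1})\|\{F,G\}\|_{C^0} + O(N^{-1})\cdot(\text{subdivision-scale}\times C)$ after renormalizing the subdivision scale; optimizing over this scale produces $|\zeta(F+G)-\zeta(F)-\zeta(G)| \leq \sqrt{2C\,\|\{F,G\}\|_{C^0}}$, and in particular additivity of $\zeta$ on Poisson-commuting functions. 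The main obstacle in this scheme is this last step: extracting the \emph{sharp} constant $2C$ (rather than merely some multiple of $\sqrt{C\,\|\{F,G\}\|_{C^0}}$) requires careful bookkeeping of the Trotter error and of the interplay between $f$ and its homogenization, precisely as carried out in \cite{S-Zoll}; everything else is a direct transcription of the arguments there, with Theorem~\ref{thm:Vit-prod} playing the role of the uniform bound.
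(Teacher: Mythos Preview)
Your proposal is correct and follows essentially the same approach the paper indicates: the paper does not give a proof of this corollary but refers to \cite{S-Zoll} (and \cite{MonznerVicheryZapolsky} for the definition and basic properties of $\mu$ and $\zeta$) for its deduction from Theorem~\ref{thm:Vit-prod}, and your sketch is precisely the Entov--Polterovich homogenization of the Lagrangian spectral invariant, with the bounded-defect input coming from the uniform bound on $\gamma$. The only point to watch is the identification $\gamma(\phi) = f(\phi) + f(\phi^{-1}) = \gamma(\phi(L),L;\bK)$, which in the paper's conventions involves both the unit $e$ and the top class $\mu_L$ (via \eqref{eq: spec norm}) rather than just $c([L],-)$ twice---but this is a standard Poincar\'e-duality identity for Lagrangian spectral invariants and does not affect the argument.
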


These maps were defined, and shown to enjoy various interesting properties in \cite[Theorems  1.3 and 1.8, Propositions 1.4 and 1.9]{MonznerVicheryZapolsky}, yet the quasi-morphism property, while anticipated therein, was hitherto known only for $L \in \{ \R P^n, \C P^n, \HH P^n, S^n \}$ covered in \cite{S-Zoll}. Moreover, in the new case of $T^n,$ the quasimorphism $\mu: \Ham_c(D^*_g T^n) \to \R$ in the case of $T^n$ is immediately seen to be invariant under finite coverings $T^n \to T^n,$ scaled suitably, as defined in \cite{Viterbo-homog} (see also \cite{MonznerVicheryZapolsky}). A similar invariance holds for products $T^n \times L$ with $L$ string point-invertible, with the induced coverings, or for finite coverings $L' \to L$ with both $L,L'$ string point-invertible. It is an interesting topological question to determine whether or not the class of string point-invertible manifolds is closed with respect to finite coverings: we expect this to be the case when working with coefficients of characteristic zero. Furthermore, Theorem \ref{thm:Vit-prod} provides a different proof, and indeed a strengthening, of the results of \cite[Section 7]{Viterbo-homog}.

%\red{figure out the neck-stretching argument for general $L \subset M$ weakly homologically monotone}

\subsubsection{Hausdorff-continuity of the Lagrangian spectral norm}

We finish with yet another application, that is proved again by a neck-stretching argument (see \cite[Theorem F]{S-Zoll}), combined with arguments related to non-trivial fundamental groups. %\red{update to monotone case; using homological perturbation}

\begin{cor}\label{cor: embedded bound}
Let $L$ be string point-invertible over $\bK.$ Suppose $L$ is embedded as a weakly exact $\pi_1$-injective Lagrangian submanifold in a symplectically aspherical symplectic manifold $M$ that is closed or tame at infinity. Consider the pair $(U,L),$ for $U \subset M$ a Weinstein neighborhood of $L,$ that is symplectomorphic to the pair $(D,0_{L}),$ for a Weinstein domain $D \subset T^*L$ containing the zero-section $0_L \subset T^*L.$ Consider $r \in (0,1),$ and let $U^r$ be the preimage of $r \cdot D$ by the symplectomorphism. Then there exists a constant $C(D,L;\bK)$ such that if $L' \subset M$ is a Hamiltonian image of $L$ that is contained in $U^r,$ then \[\gamma(L',L;\bK) \leq  C(D,L;\bK) \cdot r.\]
\end{cor}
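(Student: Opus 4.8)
The plan is to reduce the statement to Theorem \ref{thm:Vit-prod} together with a neck-stretching argument in the spirit of \cite[Theorem F]{S-Zoll}, the extra ingredient being the handling of the non-trivial fundamental group of $L$ via the hypotheses that $L \subset M$ is weakly exact and $\pi_1$-injective. First I would reduce, using the triangle inequality for $\gamma$, to bounding $\gamma(L',L;\bK)$ for $L' = \phi(L)$ with $\phi \in \Ham(M,\om)$ and $L' \subset U^r$; and I would observe that, after composing with a Hamiltonian supported near $U$, we may assume $\phi$ is itself compactly supported in a slightly larger Weinstein neighborhood. Since $M$ is symplectically aspherical and $L \subset M$ is weakly exact and $\pi_1$-injective, the Lagrangian $L'$, being Hamiltonian isotopic to $L$ inside $U^r \subset U \cong D \subset T^*L$, lifts to an exact Lagrangian in $T^*L$ contained in $r\cdot D$: indeed the weak exactness of $L$ in $M$ plus $\pi_1$-injectivity forces the symplectic area of any disk with boundary on $L'$ (equivalently, the pullback of the Liouville form to $L'$, evaluated on loops) to vanish, so the Liouville primitive restricted to $L'$ is exact. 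Thus the pair $(U^r,L')$ is, up to the rescaling $r$, a pair of exact Lagrangians in $D \subset T^*L$.

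The next step is the rescaling estimate. By Remark \ref{prop: bound}(i)--(ii), Theorem \ref{thm:Vit-prod} holds for any Weinstein domain $D$ with completion $T^*L$, with bound given by a sum of spectral invariants $c(a_j,D,S)$ as in Equation \eqref{eq: bound on spectral norm}, and these spectral invariants rescale linearly under the $\R_{>0}$-action $\rho_r$ on $T^*L$: $c(a,r\cdot D, r\cdot S) = r\cdot c(a,D,S)$, by naturality of symplectic cohomology spectral invariants under Liouville rescaling. Applying $\rho_{1/r}$ to the exact Lagrangians $L, L' \subset r\cdot D$ produces exact Lagrangians in $D$ whose spectral norm equals $(1/r)\gamma(L',L;\bK)$ by the corresponding homogeneity of the Lagrangian spectral norm; alternatively one keeps $L,L' \subset r\cdot D$ and applies Theorem \ref{thm:Vit-prod} directly to the domain $r\cdot D$, obtaining $\gamma(L',L;\bK) \leq \sum_j c(a_j, r\cdot D, r\cdot S) = r \sum_j c(a_j,D,S) =: C(D,L;\bK)\cdot r$. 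This yields the claimed bound, with $C(D,L;\bK)$ precisely the constant of Theorem \ref{thm:Vit-prod} for the fixed domain $D$.

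The remaining point, and the one requiring care, is that the invariant $\gamma(L',L;\bK)$ appearing in the statement is the Lagrangian spectral norm computed in $M$ (via $\rH F^*(L',L)$ in $M$), whereas Theorem \ref{thm:Vit-prod} controls the spectral norm computed inside $T^*L$. Here one invokes the neck-stretching argument: stretching the neck along $S^*_g L = \partial U$ (or along $\partial U^r$), one shows that the Floer complex $CF^*(L',L)$ in $M$, for a Hamiltonian whose isotopy stays inside $U^r$, together with its action filtration, is identified with the corresponding filtered complex in $T^*L$ — the weak exactness and $\pi_1$-injectivity of $L$ guaranteeing that no extra Floer strips or disks escape into $M \setminus U$ and that the action/energy bookkeeping matches. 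This is the place where one must be most careful, exactly as in \cite[Theorem F]{S-Zoll}: one needs a compactness argument ruling out broken configurations with components in $M\setminus U$, which uses symplectic asphericity of $M$ and the $\pi_1$-injectivity to control the relative homotopy classes, and one must check that the identification is filtered, i.e. respects the action functionals up to the shift recorded by $r$. I expect this filtered neck-stretching identification — rather than the algebra, which is formal once the identification is in place — to be the main obstacle, and I would treat it by directly adapting the argument of \cite[Theorem F]{S-Zoll}, indicating the modifications needed to accommodate $\pi_1(L)\neq 0$.
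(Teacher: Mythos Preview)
Your proposal matches the paper's indicated approach: the paper does not give a detailed proof of this corollary, saying only that it follows from a neck-stretching argument as in \cite[Theorem F]{S-Zoll} combined with arguments for non-trivial fundamental groups, together with the rescaling behaviour of the bound in Remark~\ref{prop: bound}. Your outline captures exactly these ingredients and correctly isolates the filtered neck-stretching identification as the main technical point.

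One step deserves more care than you give it. You assert that $L'$ is exact in $T^*L$ (equivalently, that $\phi$ may be taken supported near $U$), justifying this by ``the symplectic area of any disk with boundary on $L'$ vanishes.'' That reasoning only controls $\int_\gamma \theta$ for loops $\gamma$ that bound disks, i.e.\ for $\gamma$ contractible in $L'$; for non-contractible $\gamma$ there is no bounding disk, and it is precisely here that $\pi_1$-injectivity must do real work. The honest argument compares the Hamiltonian cylinder in $M$ from $\phi^{-1}(\gamma)$ to $\gamma$ (which has zero $\omega$-area), the retraction cylinder in $U$ from $\gamma$ to $\pi(\gamma)$, and a free homotopy in $L$ between $\phi^{-1}(\gamma)$ and $\pi(\gamma)$ produced via $\pi_1$-injectivity; one must then still argue that the resulting closed torus has zero symplectic area, which is not a one-line consequence of symplectic asphericity and weak exactness. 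Relatedly, your claim that one may replace $\phi$ by a Hamiltonian supported near $U$ is circular: it is equivalent to the exactness you are trying to establish. You may find it cleaner to bypass the exactness discussion entirely and run the neck-stretching directly on the filtered Floer complexes, using $\pi_1$-injectivity to match connected components of the path spaces $\mathcal P_M(L,L')$ and $\mathcal P_{T^*L}(L,L')$ and to rule out breaking along Reeb orbits; this is closer to how \cite[Theorem F]{S-Zoll} is organised and is likely what the paper has in mind.
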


An example of the situation described in Corollary \ref{cor: embedded bound} is the torus $L = T^n$ embedded as $L_1 \times \ldots \times L_n$ inside $\Sigma_1 \times \ldots \times \Sigma_n,$ where for all $1 \leq j \leq n,$ the submanifold $L_j \subset \Sigma_j$ is an embedded simple closed curve in the closed oriented surface $\Sigma_j$ of genus at least $1,$ that does not bound a disk. The condition on the embedding holds when $\pi_2(M,L) = 0.$ In the cases of Corollaries \ref{cor: C^0 continuity} and \ref{cor: embedded bound}, arguments following \cite[Theorem B]{KS-bounds} show that the associated Floer-theoretic barcodes, a notion that has recently attracted much attention in symplectic topology (see \cite{PolShe} and for example \cite{UsherZhang, Zhang, PolSheSto, PolterovichRSZ-book, Team,stevenson, KS-bounds, LSV-conj, BHS-spectrum, VukasinJun, Usher-BM, RizzSull-pers, S-HZ}), up to shift, are continuous in the Hausdorff metric on the Lagrangians considered as subsets of $M$ with respect to a background Riemannian metric (see \cite{S-Zoll} for a discussion of this kind of result, which in particular does not follow from $C^0$ continuity of the Hamiltonian spectral norm). Moreover, one can deduce analogues of Corollary \ref{cor: embedded bound} for certain monotone Lagrangian submanifolds, however for reasons of conciseness we defer this discussion to a further publication.

\subsubsection{Outlook}

As a closing remark, we mention that it would be very interesting to see if additional algebraic structures on symplectic cohomology and string topology could be applied to extend the class of manifolds $L$ for which Viterbo's conjecture holds. For instance, introducing suitable local systems on $\cl LL,$ or considering higher operations in suitable $L_{\infty}$-algebras or SFT algebras, may yield further such examples. 

%that are trivial when restricted to the image of the constant-loop embedding $L \to \cl LL,$
%(see \cite{TonkonogDescendants} and references therein, for example)

\section{Preliminaries}\label{sec: prelim}

Throughout the paper we follow the definitions and notations of Seidel and Solomon \cite{SeidelSolomon-q}, with one distinction: we take the opposite sign for all action functionals. Furthermore, we adopt the following convention: everywhere we argue up to $\epsilon,$ and allow arbitrarily small perturbations of all Hamiltonian terms involved. For example, when the Hamiltonian perturbation data has curvature zero, it means that we may achieve regularity by a Hamiltonian term arbitrarily close to the given one, in such a way as to make the curvature arbitrarily small.

We sketch the part of definitions where additional detail is required. In particular we look at exact Lagrangian submanifolds $L$ inside a Weinstein manifold $W$ with Liouville form $\theta,$ and symplectic form $\omega = d\theta.$ We restrict attention to the case when $W$ is the completion of a Weinstein domain $D$ with compact contact boundary $S,$ and we consider $L \subset D.$ 

For the definition of symplectic cohomology we choose a cofinal family of Hamiltonians $H_{\lambda}$ that are $\epsilon$-small in the $C^2$ norm on $D,$ and are in fact non-positive Morse functions there with gradient pointing outward of $D$ at $S.$ Furthermore outside of $D \cup C$ for a small collar neighborhood $C = C_{\lambda}$ of $S,$ $H_{\lambda} = \lambda \cdot r,$ where $r$ is the radial coordinate on the infinite end $([1,\infty) \times S, d(r\alpha)),$ $\alpha = \theta|_S,$ of the completion, with the property that $\lambda \notin \mrm{Spec}(\alpha,S),$ that is, it is not a period of a closed Reeb orbit of $\alpha.$ The latter is a smooth loop $\gamma:\R/T\Z \to S,$ $T > 0,$ such that $\gamma'(t) = R_{\alpha} \circ \gamma(t)$ for all $t \in \R/T\Z,$ and the Reeb vector field $R_{\alpha}$ on $S$ is defined by the conditions $\iota_{R_{\alpha}} \alpha = 1,$ $L_{R_{\alpha}} \alpha = \iota_{R_{\alpha}} d\alpha = 0.$ Furthermore, we require that $0 < \epsilon \ll \epsilon_{\alpha} = \min \mrm{Spec}(\alpha,S),$ and that $H_{\lambda}$ be radial increasing and convex in $C.$  Furthermore, (an arbitrarily small perturbation in $D \cup C$ of) $H_{\lambda}$ is non-degenerate at all its $1$-periodic orbits, which necessarily lie in $D \cup C.$ All closed $H_{\lambda}$ one-periodic orbits in $C$ are in a $2$ to $1$ correspondence with the Reeb orbits of $\alpha$ of periods in $[\epsilon_{\alpha},\lambda),$ and we choose $C, H_{\lambda}$ so that for a fixed $\delta > 0$ independent of $\lambda$ the $H_{\lambda}$-actions of these orbits are $\delta$-close to their $\alpha$-periods. We choose $\delta \ll \epsilon_{\alpha}.$ Furthermore, we require that $H_{\lambda_k} \leq H_{\lambda_{k+1}},$ $k \geq 1,$ on $W$ for a strictly increasing sequence $\{\lambda_k\}_{k \geq 1},$ $\lambda_k \xrightarrow{k \to \infty} \infty,$ in $\R_{>0} \setminus \mrm{Spec}(\alpha,S),$ and that $||H_{\lambda_k}||_{C^2(D)} \xrightarrow{k \to \infty} 0.$ That these choices can be made is standard material on symplectic cohomology (see for example \cite[Section 5]{HutchingsGutt-cube}). From now on, when we write $H_{\lambda}$ we assume that $\lambda = \lambda_k$ for some $k \geq 1.$ 

Finally, for two fixed Lagrangian submanifolds $L_0,L_1 \subset D$ we may choose $H_{\lambda_k}$ on $D$ so that the intersection $\phi^1_{H_{\lambda_k}} (L_0) \cap L_1$ is transverse for all $k \geq 1.$ We recall that the $\omega$-compatible almost complex structures $J$ that we consider are of convex type: on the infinite end of $W,$ $J \del_r = R_{\alpha},$ and $J$ is invariant under translations in $\rho = \log(r).$ The action of a periodic orbit $x$ of $H$ is defined as \[\cA_H(x) = -\int_0^1 H(t,x(t))\,dt + \int_x \theta.\] %We recall that we consider the Hamiltonian vector field $X^t_H$ of $H$ defined by $\iota_{X_H^t} \om = -d (H(t,-)).$

We consider the Floer cohomology groups $CF^*(H_{\lambda}),$ that as $\bK$-modules have generators corresponding to $1$-periodic orbits of $H_{\lambda},$ the coefficient near $x_-$ of whose differential $d_{H;J}$ evaluated on $x_+,$ for $J$-generic, counts isolated solutions $u: \R \times S^1 \to W$ to the Floer equation \[ \del_s u + J_t(u) (\del_t u - X_H^t(u)) = 0,\] with asymptotic conditions $u(s,-) \to x_{\pm}(-),$ as $s \to \pm\infty,$ for $1$-periodic orbits $x_{\pm}$ of $H = H_{\lambda}.$ Here $X_H$ is the time-dependent Hamiltonian vector field of $H$ given by $\iota_{X_H^t} \omega = - d(H(t,-)).$ Note that the critical points of $\cl A_H$ on the loop space $\cL W$ are precisely given by time-1 periodic orbits of the isotopy $\{\phi^t_H\}$ generated by $X_H.$ Furthermore, if $d_{H;J}(y) = z,$ then $\cl A_H(y) > \cl A_H(z).$ Finally, $CF^*(H_{\lambda_k})$ forms a direct system with respect to the natural order on $\{\lambda_k\},$ by means of Floer continuation maps: $CF^*(H_{\lambda_k}) \to CF^*(H_{\lambda_{k'}})$ for $\lambda_k \leq \lambda_{k'}.$ Here it is important that $H_{\lambda_k}(t,x)$ is increasing as a function of $k.$ The symplectic cohomology of $W$ is defined as \[SH^*(W) = \lim_{\rightarrow} CH^*(H_{\lambda}) = \lim_{\rightarrow} CH^*(H_{\lambda_k}).\] Its filtered version associated to $(D,S)$ is defined as \[SH^*(W)^{< t} = \lim_{\rightarrow} CH^*(H_{\lambda_k})^{< t},\] where $CH^*(H_{\lambda_k})^{< t}$ is the subcomplex generated by 1-periodic orbits of action strictly smaller than $t.$

Given two exact Lagrangian submanifolds $L_0, L_1 \subset D,$ we choose generic perturbation data $\cD = (J^{L_0,L_1}, K^{L_0,L_1})$ consisting of an almost complex structure $J^{L_0,L_1}_t$ that depends on time $t \in [0,1],$ and a Hamiltonian $K^{L_0, L_1}$ that is radial outside of $D \cup C$ (for example zero there), and define the Floer complex $CF(L_0,L_1; \cD)$ with generators corresponding to $X_{K^{L_0,L_1}}$-chords from $L_0$ to $L_1$, the matrix coefficients $\left< d_{L_0,L_1; \cD} (x_+) , x_-\right>$ of whose differential $d_{L_0,L_1; \cD}$ count isolated solutions $u: \R \times [0,1] \to W$ to the Floer equation \[\del_s u + J^{L_0,L_1}_t(u)(\del_t u - X^t_H(u)) = 0,\] with boundary conditions \[u(\R,0) \subset L_0, \; u(\R,1) \subset L_1,\] and uniform asymptotics \[ u(s,-) \xrightarrow{s \to \pm \infty} x_{\pm}(-).\]
%\red{or the other direction?}

Enhancing $L_0, L_1$ to $\ul L_0 = (L_0, f_0), \ul L_1 = (L_1, f_1)$ by choices of primitives $f_0 \in \sm{L_0, \R},$ $f_1 \in \sm{L_1, \R},$ we define the action functional on the space of paths $\cl P(L_0, L_1)$ in $W$ from $L_0$ to $L_1,$ \[\cA_{\ul L_0, \ul L_1;\cl D}: \cl P(L_0, L_1) \to \R\] \[\cA_{\ul L_0, \ul L_1;\cl D}(x) = - \int_{0}^{1} K^{L_0,L_1}(t,x(t)) + \int_x \theta + f_1(x(1)) - f_0(x(0)).\]

The critical points of $\cA_{\ul L_0, \ul L_1;\cl D}$ correspond to the generators of $CF(L_0,L_1;\cD),$ and if $d_{\ul L_0, \ul L_1; \cl D}(y) = z$ then $\cA_{\ul L_0, \ul L_1;\cl D}(y) > \cA_{\ul L_0, \ul L_1;\cl D}(z).$ Furthermore, as we assume that $L_0, L_1$ are connected, $\cA_{\ul L_0, \ul L_1;\cl D}$ does not depend on the enhancements $\ul L_0, \ul L_1$ of $L_0, L_1$ up to an additive constant.

%Now we consider $CF^*(H_{\lambda}), 

%$CF^*(L,L), CF^*(L_0,L_1).$ 

%\red{sketch definition of Floer homology, Lagrangian Floer homology, %monotone continuation maps, symplectic homology}\\

%\red{define all actions carefully!}\\

For a class $a \in SH^*(W)\setminus \{0\},$ its symplectic cohomology spectral invariant $c(a,D,S)$ relative to the domain $D$ with contact-type boundary $S,$ is defined as \[c(a,D,S) = \inf\{t \in \R\,|\, a \in \ima\left( SH^*(W)^{<t} \to SH^*(W) \right) \},\] where $SH^*(W)^{<t} \to SH^*(W)$ is the natural map induced by the 
%\[c(a,D,S) = \inf\{t \in \R\,|\, \pi^{\infty,t}(a) = 0 \in SH^*(W)^{\geq t} \},\] where $\pi^{\infty,t}: SH^*(W) \to SH^*(W)^{\geq t}$  is the natural map induced by the projections $CF^*(H_{\lambda}) \to CF^*(H_{\lambda})^{\geq t},$ exhibiting $CF^*(H_{\lambda})^{\geq t}$ as a quotient complex of 
inclusions of complexes $CF^*(H_{\lambda})^{< t} \subset CF^*(H_{\lambda}).$ These spectral invariants (see \cite{VukasinJun,JunyoungLee-Hill,PolterovichRSZ-book}) are known to satisfy the following properties. First, $c(a,D,S)$ is given as $\int_{\gamma}\alpha_S$ for a certain $\alpha_S$-Reeb orbit $\gamma$ on $S.$ In particular $c(a,D,S) > 0.$ Second, $c(a,D,S)$ is monotone with inclusions of Liouville domains $D \subset D',$ with completion $W$ (\cite{VukasinJun},\cite[Section 8]{HutchingsGutt-cube}). Finally, for $t \in \R,$ \[c(a,\psi^t D, \psi^t S) = e^t c(a, D, S)\] where $\psi^t$ is the flow of the Liouville vector field $X$ given by $\iota_X \omega = \lambda.$ In particular if $\psi^{-t} D \subset D' \subset \psi^t D$ then \begin{equation}\label{eq: Jun-Vukasin Lipschitz}|\log c(a,D,S) - \log c(a,D',S')| \leq t.\end{equation}

For a class $x \in HF^*(L_0,L_1) \setminus \{0\}$ its spectral invariant $c(x,\ul L_0, \ul L_1; \cD)$ relative to the enhancements $\ul L_0, \ul L_1$ and perturbation data $\cD,$ is set to be \[ c(x,\ul L_0, \ul L_1; \cD) = \inf\{t \in \R\,|\, x \in \ima\left(HF^*(L_0, L_1; \cD)^{<t} \to HF(L_0, L_1; \cD)\right) \},\]  %\[ c(x,\ul L_0, \ul L_1; \cD) = \inf\{t \in \R\,|\, \pi^{\infty,t}(x) = 0 \in HF^*(L_0, L_1\; \cD)^{\geq t} \},\] where $\pi^{\infty,t}: HF^*(L_0,L_1; \cD) \to HF^*(L_0,L_1)^{\geq t}$ is induced by the quotient projection \[CF^*(L_0,L_1; \cD) \to CF^*(L_0,L_1)^{\geq t} = CF^*(L_0,L_1; \cD)/ CF^*(L_0,L_1; \cD)^{<t},\] 
where $HF(L_0, L_1; \cD)^{<t}$ is the homology of the subcomplex $CF^*(L_0,L_1; \cD)^{<t}$ of $CF^*(L_0,L_1; \cD)$ generated by chords $z$ of action $\cl A_{\ul L_0, \ul L_1}(z) < t.$ It is well-known (see \cite{LeclercqZapolsky,S-Zoll} and references therein) that $c(x,\ul L_0, \ul L_1; \cD)$ is given by $\cl A_{\ul L_0, \ul L_1; \cD}(z)$ for a generator $z$ of $CF^*(L_0,L_1; \cD),$ and is therefore finite. Furthermore, $c(x,\ul L_0, \ul L_1; \cD)$ does not depend on the almost complex structure part $J^{L_0,L_1}$ of $\cD,$ and is Lipschitz in the Hofer norm of the Hamiltonian term $K^{L_0,L_1}$ of $\cD,$ in the sense that if the Hamiltonian terms $K,K'$ of $\cD, \cD'$ agree outside a compact set (in our case this means that their slopes at infinity agree), then \[|c(x,\ul L_0, \ul L_1; \cD) - c(x,\ul L_0, \ul L_1; \cD')| \leq \int_0^1 (\max_W (F_t) - \min_W (F_t))\,dt,\] where $F = K' \# \overline K$ is the Hamiltonian generating the flow $\phi^t_{K'} \circ (\phi^t_K)^{-1}.$ This allows us to extend the spectral invariant to arbitrary perturbations (even continuous ones), and in particular we define $c(x, \ul L_0, \ul L_1)$ as the limit of $c(x,\ul L_0, \ul L_1; \cD)$ as the norm of the Hamiltonian term of $\cD$ tends to zero. Finally, we remark that if $\phi^t_K(L_0) \subset D \setminus C,$ for all $t \in [0,1],$ and where $C$ is the collar neighborhood of $S$ such that $K$ is convex radial in $C$ and has slope $\lambda$ outside $D \cup C,$ then $c(x, \ul L_0, \ul L_1; \cD)$ depends only on $K(t,x)$ for $(t,x) \in [0,1] \times (D \setminus C),$ by a suitable maximum principle. Indeed, in this case the filtered Floer complex $(CF^*(L_0,L_1; \cD), \cl A_{\ul L_0, \ul L_1; \cD})$ does not depend on $K(t,x)$ for $(t,x) \notin [0,1] \times (D \setminus C).$ In particular, if the Hamiltonian term of $\cD_k$ is given by $H_{\lambda_k},$ then \begin{equation}\label{eq: turning off slope} c(x,\ul L_0, \ul L_1; \cD_k) \xrightarrow{k \to \infty} c(x,\ul L_0, \ul L_1).\end{equation}  From now on, for each exact Lagrangian $L$ we fix an enhancement $\ul L,$ and set $c(x,L_0,L_1; \cD) := c(x, \ul L_0, \ul L_1; \cD),$ and $c(x,L_0,L_1) := c(x, \ul L_0, \ul L_1).$ Our results will not depend on this choice.

\bs

Signs in the count of the differentials, as well as gradings, in both kinds of Floer complexes are determined by certain background classes. We summarize these below, and refer to \cite{SeidelSolomon-q},\cite{AK-simplehomotopy},\cite{Abouzaid-nearbyMaslov},\cite{Kragh-nearby},\cite{SeidelBook},\cite{AbouzaidBook} for details. For grading in the symplectic homology, we assume that $2 c_1(TW) = 0,$ in which case the Grassmannian Lagrangian bundle $\cl Lag(M) \to M$ admits a cover $\til{\cl Lag}(M) \to M$ with fibers given by universal covers of the former fibers, and for signs we fix a background class $b \in H^2(W,\bF_2).$ In the main case we consider, $W = T^*L,$ for $L$ a closed manifold, our assumption holds, and we set $b = \pi^* w_2(L),$ for $\pi:T^*L \to L$ the natural projection. The existence of the cover can be deduced by considering the section of $\cl Lag(M)$ given by the Lagrangian subspaces tangent to the fibres. We equip each exact Lagrangian $L' \subset T^*L$ with the structure of a brane as follows. By \cite{Kragh-nearby}, the Maslov class of $L'$ vanishes, whence $\til{\cl Lag}(M)|_{L'}$ admits $H^0(L',\Z) = \Z$-worth of sections, which we call gradings, of which we pick one. By \cite{Abouzaid-nearbyMaslov}, $\pi_{L'}^* w_2(L) = w_2(L'),$ hence $L'$ is relatively Spin with respect to $b$, and futhermore out of the $H^1(L',\bF_2) = H^1(L,\bF_2)$ choices of a relative Spin structure we fix one, such that $L'$ endowed with these choices is Floer-theoretically equivalent to the zero-section $L$ (see Theorem \ref{thm: nearby Lag Fukaya}) with the standard relative Spin structure and grading. Throughout the paper, when considering Lagrangians, we keep in mind such an underlying determination of a brane structure.
%\red{add}

%\red{turning off slope}

\bs

Cycles in Deligne-Mumford moduli spaces of disks, considered as Riemann surfaces with boundary, decorated with interior and boundary punctures, whose universal curves are equipped with choices of positive or negative (input or output type) cylindrical ends at each puncture, induce operations on the various Floer homology groups considered. Indeed, we may equip the universal curves with Floer data compatible with gluing and compactification, wherein the cylindrical ends allow one to write suitable Floer equations and asymptotic conditions on the punctures to land in the correct Floer complexes. For more details we refer to \cite{SeidelSolomon-q}. In our case, as we wish to consider the behavior of actions and energies in our operations, we need to make further choices. In particular, we use the notion of cylindrical strips introduced and used in \cite{KS-bounds}. In fact, the Floer decorations for our main homological operation were already considered in \cite{KS-bounds} in the case of closed monotone symplectic manifolds, and their monotone Lagrangian submanifolds. We note that in constast to the closed case, in the case of Liouville manifolds one must ensure that the images of all Floer solutions lie in a compact subset of $W.$ This is accomplished by the integrated maximum principle (see \cite[Lemma 7.2]{AbouzaidSeidel} or \cite[Section 5.2.7]{AbouzaidBook}).

%\red{a bit more formulas for the TQFT}\\
%\red{TQFT: define BV-operator in SH}\\

In particular, consider the moduli space of disks with a unique input interior marked point and one output boundary marked point, with the cylindrical end at the interior marked point chosen so that the asymptotic marker points towards the boundary marked point. This moduli space is a point, and we may equip it with a choice of a cylindrical strip from the input to the output. We set the Hamiltonian Floer datum to be $H_\lambda \otimes dt$ on the cylindrical strip. Furthermore, we choose boundary condition $L$ for the Floer solutions. This gives us, for a suitable perturbation datum $\cD = \cD^{L,L}$ with Hamiltonian part compactly supported and of $C^2$ norm $o(1)$ as $\lambda \to \infty,$ an operation \[\phi_L^0: CF^*(H_{\lambda}) \to CF^*(L,L; H_{\lambda}) \to CF^*(L,L;\cD),\] which is a chain map. This operation yields the canonical restriction map \[r_L: SH^*(W) \to HF^*(L,L).\] As by \cite[Section 2.5]{KS-bounds}, arguing up to $\epsilon,$ the Floer data chosen as above has zero curvature, all perturbation data, in particular $\cD^{L,L},$ can be chosen to have Hamiltonian parts sufficiently small, so that this operation satisfies $\cA_{L,\cD}(\phi^0_L(x)) \leq \cA_{H_{\lambda}}(x) + 2 \epsilon.$

%\red{maybe write a general form of action estimates}

%check direction of action estimates everywhere; 

Furthermore, consider the moduli space of disks with two boundary marked points, an input and an output, and one interior marked point with asymptotic marker pointing towards the output. This moduli space is identified with an interval $\R,$ and its Deligne-Mumford compactification is identified with a closed interval by adding nodal disks at $-\infty,$ and $+\infty.$ See \cite{SeidelSolomon-q} for a description of these nodal disks. Choose cylindrical ends accordingly, and choose a cylindrical strip between the interior input and the boundary output. On this cylindrical strip, let the Hamiltonian part of the Floer datum be $H_{\lambda} \otimes dt.$  This yields an operation: 
\[ \phi^1_{L_0,L_1}: CF^*(H_{\lambda}) \otimes CF^*(L_0,L_1) \to CF^*(L_0,L_1; H_{\lambda})[-1] \to CF^*(L_0,L_1)[-1] . \] Considering the above compactification, one obtains \cite{SeidelSolomon-q} that $\phi^1_{L_0,L_1}$ provides a homotopy between the two maps \[\mu_2(\phi^0_{L_0}(a) , x),\] \[(-1)^{|a|\cdot |x|} \mu_2(x,\phi^0_{L_1}(a)),\] where $a \otimes x \in CF^*(H_{\lambda}) \otimes CF^*(L_0,L_1).$ Furthermore, by our choice of Floer data on the cylindrical strip, whose curvature vanishes by definition, choosing the Floer data $\cl D = \cl D^{L_0,L_1}$ to have sufficiently small Hamiltonian part we obtain that for all $a \otimes x \in CF^*(H_{\lambda}) \otimes CF^*(L_0,L_1),$ \[\cA_{L_0,L_1,\cl D}(\phi^1_{L_0,L_1}(a,x)) \leq \cA_{H_{\lambda}}(a) + \cA_{L_0,L_1,\cl D}(x) + 2\epsilon.\]

Finally, let $a \in CF^*(H_{\lambda})$ be a cycle, whose cohomology class represents $\alpha \in SH^*(W),$ with \[r_L(\alpha) = 0 \in HF^*(L,L).\] Following \cite[Definition 4.2]{SeidelSolomon-q}, we call $L$ {\it $a$-equivariant with primitive $c_L \in CF^*(L,L)$} if \[\phi^0_{L}(a) = \mu_1(c_L).\]  

As in \cite[Equation 4.4]{SeidelSolomon-q}, given a cycle $a \in CF^k(H_{\lambda}),$ and two $a$-equivariant Lagrangians $L_0,L_1 \subset D,$ with primitives $c_{L_0} \in CF^{k-1}(L_0,L_0),$ $c_{L_1} \in CF^{k-1}(L_1,L_1),$ we can upgrade $\phi^1_{L_0,L_1}(a,-)$ to a chain map \[\til{\phi}^1_{L_0,L_1}(a,-): CF^*(L_0,L_1) \to CF^*(L_0,L_1)[-1+k],\] by setting for homogeneous $x \in CF^*(L_0,L_1),$  \[\til{\phi}^1_{L_0,L_1}(a,x) = {\phi}^1_{L_0,L_1}(a,x) - \mu_2(c_{L_0},-) + (-1)^{(k-1) |x|} \mu_2(-,c_{L_1}).\] Of course $\til{\phi}^1_{L_0,L_1}(a,x)$ depends on the choice of primitives $c_{L_0}, c_{L_1}$ for the $a$-equivariant structures. We keep this dependence implicit in the notation for conciseness, and we further discuss it in Section \ref{sec: homology}.

%\red{fix signs; do we need to assume that $k$ is odd?}

For sufficiently $C^2$ Hamiltonian-small perturbation data $\cD^{L_0,L_0}, \cD^{L_1,L_1}$ it is easy to see that all chains in $CF^*(L_0,L_0), CF^*(L_1,L_1)$ are of actions $\cA_{L_0,L_0; \cD}, \cA_{L_1, L_1; \cD}$ bounded in absolute value by $\epsilon.$ Hence, assuming that $\cA_{H_{\lambda}}(a) \geq \epsilon_{\alpha}/2 \gg \epsilon,$ we obtain that \begin{equation}\label{eq: action of upgraded multiplication map}\cA_{L_0,L_1,\cl D}(\til{\phi}^1_{L_0,L_1}(a,x)) \leq \cA_{L_0,L_1,\cl D}(x) + \cA_{H_{\lambda}}(a) + 2\epsilon.\end{equation} By our choices in the construction of symplectic cohomology, the assumption on $\cA_{H_{\lambda}}(a)$ is verified unless $a$ lies in the subcomplex formed by generators located in $D.$ In case $a$ does lie in this subcomplex, and $a \neq 0,$ then $\cA_{H_{\lambda}}(a) \geq -\epsilon,$ in which case \eqref{eq: action of upgraded multiplication map} holds still. The case $a = 0$ is trivial. Hence \eqref{eq: action of upgraded multiplication map} applies to all $a \in CF^*(H_{\lambda}),$ and $x \in CF^*(L_0,L_1).$

%In our applications, this will be the case for homological reasons. 

%\red{write $\phi^0, m_{L_0,L_1}, \phi^1_{L_0,L_1},
% \til{\phi}^1_{L_0,L_1}$}
\bs

%\red{Careful with orientations etc} 
Finally, we recall two fundamental results on the symplectic topology of cotangent bundles. The first result, proved by Viterbo \cite{Viterbo-iso}, Abbondandolo-Schwarz \cite{AbbSchwarz}, and Salamon-Weber \cite{SalamonWeber} in the case of $\bK = \bF_2,$ or for $Spin$ manifolds and arbitrary coefficients, and by Abouzaid and Kragh in the geneneral case (see \cite{AbouzaidBook} and the references therein), asserts a relation between the symplectic cohomology of $W = (T^*L, \theta_{\mrm{can}})$ considered as a Weinstein manifold, and the homology of the free loop space $\cL L.$ In general, to compare signs between the two theories, a local system on $\cL T^*L$ should be introduced, as mentioned above. For certain choices of $L$ and $\bK,$ such as $\bK = \bF_2,$ or $L$ being Spin, this local system is trivial and can therefore be ignored. Finally, we note that it is important for our purposes, that this is an isomorphism of Gerstenhaber algebras over $\bK$, rather than simply one of $\bK$-algebras. This aspect of the isomorphism is discussed in \cite{AbouzaidBook}: in fact it is an isomorphism of BV-algebras. 

\begin{thm}[Viterbo isomorphism]\label{thm: Viterbo isomorphism}
Let $W = T^*L$ with the standard Liouville structure. There exists an isomorphism \[\Phi: SH^{\ast}(W) \to H_{n-\ast}(\cL L),\] of BV-algebras over $\bK,$ where $SH^{\ast}(W)$ is endowed with the pair-of-pants product, and the BV-operator arising from the moduli space of cylinders with free asymptotic markers at infinity, while $H_{n-\ast}(\cL L)$ is endowed with the Chas-Sullivan product, and the BV-operator given by suspending the $S^1$-action by loop-rotation on $\cl LL$. The map $ev:H_{n-\ast}(\cl LL) \to H_{n-*}(L)$ corresponds to the map $r_L: SH^*(W) \to HF^*(L)$ by the isomorphism $\Phi$ and Poincar\'{e} duality.
\end{thm}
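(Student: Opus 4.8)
The plan is to build $\Phi$ from the established constructions of Viterbo \cite{Viterbo-iso}, Abbondandolo--Schwarz \cite{AbbSchwarz,AbbSchwarz-corr}, Salamon--Weber \cite{SalamonWeber}, and Abouzaid--Kragh \cite{AbouzaidBook,AK-simplehomotopy,Abouzaid-nearbyMaslov,Kragh-nearby}, and to record the two features we will actually use downstream: that the isomorphism respects the full BV-algebra structure, and that it intertwines $r_L$ with $ev$. First I would produce the chain-level map. Since any two cofinal families of admissible Hamiltonians compute the same $SH^*(W)$, one may pass from the family $\{H_\lambda\}$ fixed above to one of fiberwise quadratic growth on $T^*L$; for such a Hamiltonian the Abbondandolo--Schwarz/Salamon--Weber construction provides a chain map from $CF^*(H)$ to the Morse complex of the (perturbed) energy functional $E$ on $\cL L$, by counting solutions of the Floer equation on a half-cylinder $(-\infty,0]\times S^1$ that are asymptotic to a Hamiltonian orbit at $-\infty$ and whose outgoing boundary circle flows down $-\nabla E$ to a prescribed critical loop. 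The grading conventions give the shift $SH^k(W)\cong H_{n-k}(\cL L)$. That this is a chain map is standard, and that it is a quasi-isomorphism follows either from an action--energy comparison of the two filtrations together with an adiabatic-limit argument, or, following \cite{AbouzaidBook}, from the identification of $SH^*(T^*L)$ with the Hochschild homology of the chains $C_{-*}(\Omega L)$ on the based loop space, which is in turn $H_*(\cL L)$.

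Next I would promote $\Phi$ to an isomorphism of BV-algebras. For the product one compares the pair-of-pants product on $SH^*$ with the Chas--Sullivan product --- the latter built from the Thom--Pontryagin umkehr map for the codimension-$n$ inclusion of the figure-eight space $\cL L\times_L\cL L$ into $\cL L\times\cL L$ followed by concatenation of loops --- and the matching is effected by a common moduli space of perturbed holomorphic pairs of pants with Morse half-trajectories attached, producing a chain homotopy that realizes $\Phi(\alpha\ast\beta)=\Phi(\alpha)\ast\Phi(\beta)$ \cite{AbbSchwarz,AbouzaidBook}. For the BV operator, the operator on $SH^*$ comes from the moduli of cylinders with a freely rotating asymptotic marker; pushing this rotation through the half-cylinder construction yields an $S^1$-parametrized chain map whose boundary realizes, on the loop-space side, the operator obtained by crossing with the fundamental class of the loop-rotation $S^1$-action, namely the Chas--Sullivan $\Delta$. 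The genuinely delicate point --- and the reason the general-coefficient statement rests on \cite{AbouzaidBook,AK-simplehomotopy,Abouzaid-nearbyMaslov,Kragh-nearby} --- is the orientation bookkeeping: one must track the local system on $\cL T^*L$ determined by the background data fixed above ($b=\pi^*w_2(L)$, together with the standard relative Spin structure and grading on the zero section) and verify that it is the pullback of the local system on $\cL L$ detecting $w_2(L)$, so that $\Phi$ is an honest isomorphism of BV-algebras over $\bK$ and not merely of graded $\bK$-modules. For $\bK=\bF_2$, or for $L$ Spin, this subtlety is vacuous.

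Finally I would identify $ev$ with $r_L$. Recall that $r_L$ was defined above via the operation $\phi^0_L$ counting disks with one interior input puncture and boundary on the zero section $L$, and that the self-Floer cohomology of the exact, unobstructed zero section is its ordinary cohomology, $HF^*(L,L)\cong H^*(L)$, which Poincar\'e duality identifies with $H_{n-*}(L)$. Composing $\phi^0_L$ with the Abbondandolo--Schwarz map and degenerating, the resulting count is of perturbed half-planes with boundary on $L$ and one negative interior asymptotic to a loop, which in the adiabatic limit collapses to evaluating that loop at its boundary marked point --- precisely the map $\cL L\to L$ inducing $ev$. Equivalently, $r_L$ is a unital ring homomorphism, hence pinned down by naturality, compatibility with the module structures, and its value on the unit; under $\Phi$ it must therefore coincide with the unique unital ring retraction of $\iota$ on the loop-space side, which is $ev$.

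The main obstacle is the orientation-and-local-system analysis of the second step, i.e. obtaining the BV-\emph{algebra} isomorphism over an arbitrary field $\bK$ with correct signs; all the remaining ingredients are either routine Morse--Floer comparisons or standard moduli-space degeneration arguments, and each is available in the cited references.
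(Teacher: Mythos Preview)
The paper does not give its own proof of this theorem: it is stated as a background result and attributed to Viterbo, Abbondandolo--Schwarz, Salamon--Weber, and Abouzaid--Kragh, with the BV-algebra compatibility and the general-coefficient case referred specifically to \cite{AbouzaidBook}. Your sketch accurately summarizes those constructions and correctly flags the orientation/local-system issue as the genuine subtlety handled in \cite{AbouzaidBook}; it is entirely consistent with the paper's treatment, which is simply to cite the literature.
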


The second result, due to Fukaya-Seidel-Smith \cite{FukayaSeidelSmith-cotangentsc} in the simply connected case, and Abouzaid \cite{Abouzaid-nearbyMaslov} and Kragh \cite{Kragh-nearby}, in the general case, asserts that each exact Lagrangian $L'$ in the cotangent bundle $T^*L$ is isomorphic to $L$ in the Fukaya category of $T^*L.$ It is not difficult to observe that this isomorphism is in fact an isomorphism of modules over $SH^*(T^*L)$: indeed, after one knows that the isomorphism is given by multiplication by continuation elements, this is a consequence of the homotopy property of $\phi^1_{L_0,L_1}.$ We state a simplified version that is sufficient for our purposes, referring to \cite{AK-simplehomotopy}. %\red{degrees above and below}

\begin{thm}[Exact nearby Lagrangians are Floer-theoretically equivalent]\label{thm: nearby Lag Fukaya}
Let $L$ and $\bK$ be as above, and let $L'$ be an exact Lagrangian in $T^*L.$ Then there exists an integer $i = i_{L'} \in \Z$ such that for each exact Lagrangian $K,$ the $SH^*(T^*L)$-modules $HF^*(L',K),$ and $HF^{*+i}(L,K)$ are isomorphic, and the same is true for $HF^*(K,L'),$ and $HF^{*-i}(K,L),$ for certain $i \in \Z.$ The chain-level quasi-isomorphisms in both directions can be taken to be multiplication operators \[\mu_2(-,x): CF^*(L',K) \to CF^{*+i}(L,K),\] \[\mu_2(-,y): CF^*(L,K) \to CF^{*-i}(L',K),\] respectively \[\mu_2(y,-): CF^*(K,L') \to CF^{*-i}(K,L),\] \[\mu_2(x,-):CF^*(K,L) \to CF^{*+i}(K,L'),\] for certain cycles $x \in CF^{i}(L,L'),$ $y \in CF^{-i}(L',L).$ 
\end{thm}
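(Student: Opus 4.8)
\emph{Plan of proof.}
The strategy is to reduce to the solution of the nearby Lagrangian problem in $T^*L$ at the level of the Fukaya category, and then to promote the resulting quasi-isomorphisms to $SH^*(T^*L)$-linear maps by formal $A_\infty$- and TQFT-considerations. First I would record the external input. By Kragh \cite{Kragh-nearby} the Maslov class of $L'$ vanishes, and by Abouzaid \cite{Abouzaid-nearbyMaslov} one has $\pi_{L'}^* w_2(L) = w_2(L')$; fixing the grading and relative Spin structure on $L'$ as in Section~\ref{sec: prelim}, the object $L'$ of the compact Fukaya category $\cF(T^*L)$ over $\bK$ is then well defined. By Fukaya--Seidel--Smith \cite{FukayaSeidelSmith-cotangentsc} when $L$ is simply connected, and by Abouzaid \cite{Abouzaid-nearbyMaslov} and Kragh \cite{Kragh-nearby} in general (see also \cite{AK-simplehomotopy}), $L'$ is isomorphic in the cohomological category $H(\cF(T^*L))$ to the zero section $L$, up to a shift in grading. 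More precisely, there are cycles $x \in CF^{i}(L,L')$ and $y \in CF^{-i}(L',L)$ with $\mu_1 x = \mu_1 y = 0$ whose two composites under $\mu_2$ are cohomologous to the Floer units $e_L \in HF^0(L,L)$ and $e_{L'} \in HF^0(L',L')$, the integer $i = i_{L'} \in \Z$ recording the difference between the chosen grading of $L'$ and the canonical one of $L$.

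Second, I would run the standard Yoneda-type argument (compare \cite{SeidelBook}). Because $\mu_1 x = \mu_1 y = 0$, the quadratic $A_\infty$-relation shows that the four multiplication operators $\mu_2(-,x)$, $\mu_2(-,y)$, $\mu_2(y,-)$, $\mu_2(x,-)$ of the statement are chain maps, with the indicated degree shifts $\pm i$. The cubic $A_\infty$-relation produces explicit chain homotopies (with homotopy operators of the form $\mu_3(-,x,y)$, etc.) identifying each composite such as $\mu_2(\mu_2(-,x),y)$ with $\mu_2$-multiplication by one of the composites of $x$ and $y$; combined with the fact that these composites are cohomologous to the units and with the homological unitality of $\cF(T^*L)$, this shows that $\mu_2(-,x)$ and $\mu_2(-,y)$ are mutually homotopy-inverse chain maps, as are $\mu_2(x,-)$ and $\mu_2(y,-)$. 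In particular they are quasi-isomorphisms.

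Third, I would check $SH^*(T^*L)$-linearity. The module structures on $HF^*(L',K)$ and $HF^*(L,K)$ are induced, at the chain level, by the operations $\phi^0$ and $\mu_2$ recalled in Section~\ref{sec: prelim}: for a cycle $a$ representing $\alpha \in SH^*(T^*L)$, the action of $\alpha$ is $\mu_2$-multiplication by $\phi^0_K(a)$, which by the homotopy property of $\phi^1$ agrees up to homotopy and sign with $\mu_2$-multiplication by $\phi^0_{L'}(a)$ (resp.\ $\phi^0_L(a)$). That $\mu_2(-,x)$ intertwines these is precisely the compatibility of the module action with the Fukaya composition $\mu_2$: it is witnessed by the TQFT operation attached to the moduli of disks with two boundary inputs, one interior input and one boundary output, equipped with a cylindrical strip at the interior puncture, constructed exactly as for $\phi^1_{L_0,L_1}$, which yields a chain homotopy making the relevant square commute up to homotopy. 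Equivalently and more abstractly, $\mu_2(-,x)$ is a morphism of $A_\infty$-modules over $\cF(T^*L)$, and the $SH^*(T^*L)$-module structure factors through the closed--open map $SH^*(T^*L) \to HH^*(\cF(T^*L))$ acting on module morphisms (see \cite{AbouzaidBook}), so that every Fukaya-module quasi-isomorphism is automatically $SH^*(T^*L)$-linear. Passing to cohomology then gives the claimed isomorphisms of $SH^*(T^*L)$-modules, and the three remaining multiplication maps are handled in the same way.

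The step that carries all the real content is the equivalence $L' \simeq L$ in $\cF(T^*L)$ --- the heart of the nearby Lagrangian problem --- which I would simply invoke from the literature; everything downstream is formal operadic bookkeeping. The one point worth a remark is that, in contrast with the constructions of Section~\ref{sec: prelim}, no control of actions or energies enters here: the statement concerns only unfiltered Floer cohomology, so the maps may be built from arbitrary admissible perturbation data, while all $\mu_2$-operations and chain homotopies are taken over $\bK$ with the same background classes fixed throughout, so that the resulting identifications are $\bK$-linear and respect gradings up to the shift $i$.
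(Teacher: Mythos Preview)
Your proposal is correct and matches the paper's own treatment: the paper does not prove this theorem but cites it from the literature (Fukaya--Seidel--Smith, Abouzaid, Kragh, Abouzaid--Kragh), adding only the one-sentence remark that $SH^*(T^*L)$-linearity follows from the homotopy property of $\phi^1_{L_0,L_1}$ once the isomorphism is known to be given by multiplication by continuation elements. Your three-step outline (invoke the nearby Lagrangian equivalence, run the Yoneda argument to get the $\mu_2$-multiplication quasi-isomorphisms, and use the $\phi^1$-homotopy for $SH^*$-linearity) is exactly this, just spelled out in more detail.
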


%\red{define spectral norm}\\
%It follows that for two exact Lagrangians $L_0,L_1$ in $T^*L,$ a suitable composition of the above quasi-isomorphisms is a quasi-isomorphism \[\psi_{L_0,L_1} : CF^*(L_0,L_0) \to CF^*(L_1,L_1).\] 

At this point we define the spectral norm $\gamma(L_0,L_1)$ for exact Lagrangians in $T^*L$ as follows. Choose primitives $f_0,f_1$ of the restrictions $\theta|_{L_0}, \theta|_{L_1}$ of the Liouville form $\theta$ to $L_0,L_1$ respectively. This allows us to filter $CF^*(L_0,L_1)$ by an action functional induced by $\ul L_0 = (L_0,f_0),$ $\ul L_1 = (L_1,f_1).$ Since $HF^*(L_0,L_1) \cong HF^*(L,L) \cong H^*(L),$ consider the classes $\mu,e \in HF^*(L_0,L_1)$ that correspond to the generator $\mu_L = PD([pt]) \in H^n(L),$ and the unit $1 = PD([L]) \in H^0(L)$ respectively. Recall that for a class $a \in HF^*(L_0,L_1) \setminus \{0\},$ we defined the Lagrangian spectral invariant as
\[c(a,\ul L_0, \ul L_1;\cD) = \inf\{t \in \R\;|\; a \in \ima\left(HF^*(L_0,L_1; \cD)^{<t} \to HF^*(L_0,L_1\; \cD)\right) \}.\] 
These invariants are finite, and satisfy numerous useful properties, %(see for example \cite{S-Zoll} for a relevant discussion). For example they are Lipschitz in the Hofer norm with respect to Hamiltonian deformations of $L_0$ and $L_1,$ and hence 
and in particular they are defined for arbitrary exact $L_0,L_1,$ by taking the limit as the Hamiltonian term in the perturbation datum $\cD$ goes to zero. We set the spectral norm to be \begin{equation}\label{eq: spec norm}\gamma(L_0,L_1) = c(\mu,\ul L_0,\ul L_1) - c(e,\ul L_0, \ul L_1).\end{equation} Note that as a difference of two spectral invariants it does not depend on the choice of enhancements $\ul L_0, \ul L_1$ of $L_0,L_1.$ Furthermore, by considering the identity $\mu_L = \mu_L \ast 1$ in $H^*(L),$ one obtains the identity $\mu = \mu_{L_1} \ast e,$ under the isomorphisms $H^*(L) \cong HF^*(L,L) \cong HF^*(L_1,L_1) \cong H^*(L_1)$ and $H^*(L) \cong HF^*(L,L) \cong HF^*(L_0,L_1),$ from which one obtains that $\gamma(L_0,L_1) \geq 0,$ and that the inequality is strict unless $L_0=L_1$ (see \cite{KS-bounds}). Further properties of spectral invariants imply that $\gamma(L_0,L_1) = \gamma(L_1,L_0)$ for all $L_0,L_1$ exact, and that $\gamma(L_0,L_1) \leq \gamma(L_0,K) + \gamma(K,L_1)$ for all $L_0,L_1,K$ exact, whence $\gamma$ defines a metric on the space of exact Lagrangian submanifolds of $T^*L.$ Furthermore, this metric is invariant under the action of the group of Hamiltonian diffeomorphisms: for all $H \in C^{\infty}_c([0,1] \times T^*L, \R)$ and $L_0,L_1$ exact, $\gamma(\phi L_, \phi L_1) = \gamma(L_, L_1),$ where $\phi = \phi^1_H,$ is the time-one map of the Hamiltonian isotopy generated by $H.$ Finally, note that we shall study the restriction of $\gamma$ to the subspace of exact Lagrangian submanifolds in $D \subset T^*L,$ where $D$ is a bounded Liouville domain with completion $T^*L.$ 
%\red{TQFT: define BV-operator in SH}\\
%\red{define BV-operator in string topology}\\
%\red{should one prove Theorem C?}\\

\section{Proofs}\label{sec: proof}

\subsection{A homological calculation.}\label{sec: homology}

We start with a new calculation of the map \[\til{\phi}^1_{(L_0,L_1),a}(-) = \til{\phi}^1_{L_0,L_1}(a,-)\] on the level of homology in the setting of cotangent bundles. Let $\alpha \in SH^k(W)$ for $W = T^*L$ be such that $r_L(\alpha) = 0$ in $HF^*(L,L).$ Then, in view of Theorem \ref{thm: nearby Lag Fukaya}, for each closed exact Lagrangian $L_0$ in $W$ we have again $r_{L_0}(\alpha) = 0$ in $HF^*(L_0,L_0).$ Now consider two such exact Lagrangians $L_0,L_1$ and a cycle $a \in CF^k(H_{\lambda})$ representing $\alpha.$ We require a compatibility assumption on the choice of $a$-equivariant primititives $c_{L_0},$ $c_{L_1}.$ It is immediate from the definition that $c_{L_i},$ $i \in \{0,1\},$ are defined uniquely up to closed chains in $CF^{k-1}(L_i, L_i).$ Furthermore $\til{\phi}^1_{L_0,L_1}(a,-)$ up to chain homotopy depends only on $c_{L_i}$ up to exact chains in $CF^{k-1}(L_i, L_i).$ Adding a cycles $c_i \in CF^{k-1}(L_i,L_i)$ to $c_{L_i}$ changes $\til{\phi}^1_{L_0,L_1}(a,x)$ on the homology level by \[ -\mu_2([c_0],x) + (-1)^{(k-1)|x|}\mu_2(x,[c_1]).\] In particular, by Theorem \ref{thm: nearby Lag Fukaya}, and the graded-commutativity of $HF^*(L,L)$ as an algebra, if $L_0 = L_1 = L,$ and we choose $c_{L_0} - c_{L_1}$ exact, then we get a canonically defined operator \[P'_a = [\til{\phi}^1_{{(L,L)},a}]\] on homology.
%
%\red{Pick an equivariant primitive $c_{L_0} \in CF^{k-1}(L_0,L_0)$ for $a$ on $L_0,$ and choose an equivariant primitive $c_{L_1} \in CF^{k-1}(L_1,L_1)$ for which $\psi_{L_0,L_1}(c_{L_0}) - c_{L_1}$ is exact. For example, observing that \[{\phi}^0_{L_1}(a) = \psi_{L_0,L_1}({\phi}^0_{L_0}(a)) + \mu_1(c_{L_0,L_1})\] for certain $c_{L_0,L_1} \in CF^{k-1}(L_1,L_1)$ we can take $c_{L_1} = \psi_{L_0,L_1}(c_{L_0}) + c_{L_0,L_1}.$ 
		
%With this compatible choice of equivariant structures, set $\til{\phi}^1_{(L_0,L_1),a}(-): = \til{\phi}^1_{L_0,L_1}(a,-)$. 

In view of the observations above, by means of Theorem \ref{thm: nearby Lag Fukaya}, if we are merely interested in the operator $P'_{(L_0,L_1),a}= [\til{\phi}^1_{(L_0,L_1),a}]$ on the homological level, and $L_0, L_1$ are exact Lagrangians in $T^*L,$ then we may replace both $L_0,$ and $L_1$ by $L$ in the definition. Furthermore, we may choose the primitives $c_{L_0}, c_{L_1}$ in such a way that the corresponding operator is $P'_a =[\til{\phi}^1_{{(L,L)},a}]$ as defined above with respect to the same $a$-equivariant primitive on both copies of $L.$ This point, requires a somewhat more careful discussion of the moduli spaces involved, despite being intuitively clear. To streamline the exposition, we therefore fomulate it below and defer its proof to the end of the section. 

\begin{prop}\label{prop: invariance of phi^1}
Given $a \in CF^{k}(H_{\lambda}),$ and $a$-equivariant exact Lagrangians $L_0, L_1$ in $T^*L,$ there exist equivariant primitives $c_{L_0}, c_{L_1}$ for $L_0, L_1$ such that under the isomorphism \[HF^*(L_0,L_1) \cong HF^{*+i_{L_0} - i_{L_1}}(L,L)\]  the operation $[\til{\phi}^1_{(L_0,L_1),a}]$ corresponds to the canonical operator \[P'_a: HF^*(L,L) \to HF^{*-1+k}(L,L).\]
\end{prop}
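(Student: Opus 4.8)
The plan is to unravel what $[\til\phi^1_{(L_0,L_1),a}]$ is via the TQFT structure and then reduce it to the case $L_0=L_1=L$ by functoriality of the Seidel--Solomon operation under the $SH^*(W)$-module quasi-isomorphisms of Theorem~\ref{thm: nearby Lag Fukaya}. Recall that the operation $\phi^1_{L_0,L_1}(a,-)$ is defined by counting disks with one interior input (carrying $a$), one boundary input, one boundary output, with boundary arcs mapped to $L_0$ and $L_1$; the homotopy property from \cite{SeidelSolomon-q} says precisely that on chains $\mu_1 \phi^1_{L_0,L_1}(a,x) \pm \phi^1_{L_0,L_1}(a,\mu_1 x) = \mu_2(\phi^0_{L_0}(a),x) \pm (-1)^{|a||x|}\mu_2(x,\phi^0_{L_1}(a))$, and the correction terms $-\mu_2(c_{L_0},-)+(-1)^{(k-1)|x|}\mu_2(-,c_{L_1})$ are introduced exactly to cancel the right-hand side once $\phi^0_{L_i}(a)=\mu_1 c_{L_i}$. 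The upshot is that $\til\phi^1_{(L_0,L_1),a}$ is a chain map whose homotopy class is built out of the interior-marked-point disk count together with the equivariant primitives, and is functorial for pre- and post-composition with $\mu_2(-,x')$, $\mu_2(y',-)$ by module-gluing (associativity of $\mu_2$ and compatibility of $\phi^1$ with $\mu_2$): concatenating the disk defining $\til\phi^1$ with a strip defining $\mu_2(-,x')$ is the same Floer-theoretic count, up to homotopy, whether performed before or after.

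The key steps, in order: (1) Write down the module quasi-isomorphisms $\Theta_0=\mu_2(-,x_0)\colon CF^*(L_0,K)\to CF^{*+i_0}(L,K)$ and $\Theta_1=\mu_2(-,x_1)\colon CF^*(L_1,K)\to CF^{*+i_1}(L,K)$ from Theorem~\ref{thm: nearby Lag Fukaya}, with $x_0\in CF^{i_0}(L,L_0)$, $x_1\in CF^{i_1}(L,L_1)$ cycles (and their homotopy inverses). (2) Show that conjugating $\til\phi^1_{(L_0,L_1),a}$ by these, i.e.\ the composite on $CF^*(L_0,L_1)\to CF^{*-1+k}(L,L)$, agrees up to chain homotopy with $\til\phi^1_{(L,L),a}$ for a suitable choice of $a$-equivariant primitives on the two copies of $L$. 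Here the input is that $\phi^1$ commutes with the module structure: precomposition of the $\phi^1$-disk with $\mu_2(x_0^{-1},-)$ equals postcomposition, up to a homotopy that is itself a count over a one-dimensional moduli space of disks with two boundary inputs and one interior input; hence the interior-marked-point part transports correctly. (3) Transport the correction terms: $\mu_2(c_{L_0},-)$ conjugates to $\mu_2(c'_{L_0},-)$ where $c'_{L_0}$ is an $a$-equivariant primitive for $L$, because $\phi^0_L$ itself is compatible with $\mu_2$, so $\mu_2(\phi^0_{L_0}(a),x_0)$ represents $\phi^0_L(a)$ up to exact terms; similarly for $L_1$. (4) Since primitives $c_{L_0},c_{L_1}$ for $L$ differ by a cycle in $CF^{k-1}(L,L)$, and changing them alters the operator by $-\mu_2([c_0],x)+(-1)^{(k-1)|x|}\mu_2(x,[c_1])$, which by graded-commutativity of $HF^*(L,L)$ vanishes once we pick $c_{L_0}-c_{L_1}$ exact; this is exactly the normalization defining $P'_a$. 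Combining, the conjugated operator on $HF^*(L,L)\cong HF^{*+i_0-i_1}(L_0,L_1)$ is $P'_a$.

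The main obstacle I expect is Step (2)--(3): making precise, at the chain level and with care about the interior marked point and its asymptotic marker, that the Seidel--Solomon disk count is genuinely compatible with the module multiplication $\mu_2(-,x_i)$ up to the correct homotopy. This requires choosing the Floer data on the family of disks obtained by gluing a $\mu_2$-strip onto the $\phi^1$-disk consistently with the Deligne--Mumford compactification, identifying the boundary strata of the resulting one-dimensional moduli spaces (broken configurations giving $\mu_1$-terms and the two ways of attaching the strip), and checking that no extra boundary contributions arise because $x_0,x_1$ are cycles. The degeneration analysis is routine in spirit but needs the cylindrical-strip bookkeeping of \cite{KS-bounds} to keep it clean; everything else (the algebra of primitives, the use of graded-commutativity, and the degree shift $i_{L_0}-i_{L_1}$) is formal once the compatibility is in hand. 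I would organize the proof around the single gluing/degeneration lemma and then deduce the statement by the bookkeeping above.
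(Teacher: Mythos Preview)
Your proposal is correct and follows essentially the same route as the paper: conjugate $\til\phi^1_{(L_0,L_1),a}$ by the $\mu_2$-multiplication quasi-isomorphisms of Theorem~\ref{thm: nearby Lag Fukaya}, use a gluing/degeneration argument to show the interior-marked-point disk operation commutes with these up to chain homotopy and a residual closed correction term, and then absorb that correction by re-choosing the equivariant primitives (which the statement allows). The paper makes the residual term explicit as $\mu_2\big(\mu_2(y_1,\til\phi^1_{(L,L_1),a}(x_1)) - \mu_2(\til\phi^1_{(L_0,L),a}(y_0),x_0),\,-\big)$, obtained from a relation among four arcs in a disk with five boundary inputs and one output, but this is exactly the content of your Steps (2)--(3), and the adjustment of $c_{L_0}$ in the paper plays the same role as your Step (4).
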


We proceed to compute the latter operation $P'_a$ as follows.

%We compute the operation $P'_a = [\til{\phi}^1_{{(L,L)},a}],$ with both copies of $L$ endowed with the same $a$-equivariant primitive, as follows.

% was computed by Seidel and Solomon \cite[Remark 4.4]{SeidelSolomon-q}: by means of Theorem \ref{thm: Viterbo isomorphism}, and its proofs, under natural isomorphisms the operator $P'_a$ corresponds to $P_a = ev\circ \Delta \circ m_a \circ \iota$ from Definition \ref{def: pt invertible}. %\red{state as theorem, and prove; will require identifying various other operations}

\begin{prop}\label{thm: product map isomorphism}
%The isomorphisms $\Phi: SH^{\ast}(T^*L) \to H_{n-\ast}(\cL L)$ identifies $P'_a: SH^{\ast}(T^*L) \to SH^{\ast}(T^*L)$ and $P_a: $

The isomorphism $HF^*(L,L) \cong H_{n-*}(L)$ obtained from by the isomorphism $HF^*(L,L) \cong H^*(L)$ followed with the Poincar\'{e} duality isomorphism $H^*(L) \cong H_{n-\ast}(L),$ identifies $P'_a: HF^*(L,L) \to HF^*(L,L)$ and the map $P_a: HF_{n-\ast}(L) \to HF_{n-\ast}(L).$
\end{prop}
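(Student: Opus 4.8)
The plan is to show that the operator $P'_a$, which is built from the Seidel–Solomon operations $\phi^0_L$ and $\phi^1_{L,L}$ on the Floer complex of the zero section, matches the string-topological operator $P_a = ev \circ m_a \circ \iota$ under the Viterbo isomorphism and Poincaré duality. The strategy is to reduce everything, via the Viterbo isomorphism of BV-algebras (Theorem~\ref{thm: Viterbo isomorphism}) and the closed-open map $r_L$, to a purely TQFT-level identification of the operation $\phi^1_{L,L}(a,-)$ (together with its correction terms $\mu_2(c_L,-)$) with the $\Delta$-differential of Chas–Sullivan multiplication by $a$, as expressed in the formula $[b,a] = (-1)^{|b|}(\Delta(b\ast a) - \Delta(b)\ast a - (-1)^{|b|}b\ast\Delta(a))$ from the introduction.

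First I would set up the comparison in the following order. \emph{Step 1:} Recall from Section~\ref{sec: prelim} that $\phi^0_L: CF^*(H_\lambda)\to CF^*(L,L)$ induces the closed-open map $r_L$, and that under $\Phi$ and Poincaré duality $r_L$ corresponds to $ev: H_{n-*}(\cl LL)\to H_{n-*}(L)$ (this is the last sentence of Theorem~\ref{thm: Viterbo isomorphism}). Dually, the co-unit / PSS-type map realizing $HF^*(L,L)\cong H^*(L)$ gets identified with (the Poincaré dual of) the constant-loop inclusion $\iota$. \emph{Step 2:} Identify the geometric content of $\phi^1_{L,L}(a,-)$: it is the operation defined by the moduli space of disks with one interior input (carrying $a$), one boundary input $x$, one boundary output, with all boundary on $L$. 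Using the closed-open/open-closed TQFT dictionary and the fact that $HF^*(L,L)\cong SH^*(W)$-module structure agrees with the Chas–Sullivan module structure on $H_{n-*}(L)$ over $H_{n-*}(\cl LL)$ (a consequence of the Viterbo isomorphism being one of BV-algebras, plus Theorem~\ref{thm: nearby Lag Fukaya}), I would show that the homotopy relation $\phi^1_{L,L}(a,-)\simeq \mu_2(\phi^0_L(a),-) - (-1)^{|a||x|}\mu_2(-,\phi^0_L(a))$ recorded in Section~\ref{sec: prelim} becomes, after translation, precisely the statement that $\phi^1$ realizes a chain-level primitive for the difference of left and right multiplication by $ev(a)$. \emph{Step 3:} Now incorporate the equivariant correction: since $r_L(\alpha)=0$, $\phi^0_L(a)=\mu_1(c_L)$, and the \emph{upgraded} operation $\til\phi^1_{L,L}(a,-)$ subtracts off $\mu_2(c_L,-)$ and $(-1)^{(k-1)|x|}\mu_2(-,c_L)$, the resulting $P'_a$ is exactly the secondary operation measuring the failure of $a$ to act trivially — which on the string topology side is the BV/bracket operation $m_a$ followed and preceded by $\iota, ev$. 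Concretely, I would match $P'_a = ev\circ m_a\circ\iota$ by checking it on the interior-marked-point moduli space: the interior puncture with its rotating asymptotic marker is precisely what produces the BV-operator $\Delta$ under $\Phi$, and the two nodal degenerations of the compactified interval of disks produce the two terms $\Delta(-\ast a)$ and $-(-)\ast\Delta(a)$ (up to signs), while the equivariant primitive kills the $\Delta$ applied to the image of $\iota$, consistent with the remark that $\Delta=0$ on $\iota(H_*(L))$.

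The main obstacle, I expect, will be bookkeeping of signs and degree shifts — reconciling the cohomological grading on $CF^*$, the $[-1+k]$ shift, Poincaré duality $H^*(L)\cong H_{n-*}(L)$, and the Koszul signs in the Gerstenhaber structure — so that the abstract identification of moduli spaces actually yields $P_a$ on the nose rather than a sign-twisted variant. A secondary subtlety is justifying that the interior cylindrical end with free/rotating asymptotic marker in the definition of $\phi^1$ glues compatibly with the BV moduli space used in Theorem~\ref{thm: Viterbo isomorphism}, i.e. that the closed-open map intertwines the two BV-operators at chain level up to the homotopies one controls; here I would lean on the known fact that $r_L$ is a map of BV-modules, together with Theorem~\ref{thm: nearby Lag Fukaya} to pass freely between $L$ and arbitrary exact $L'$. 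Everything else — the moduli space identifications, the gluing, the choice of Floer data — is standard, so modulo the sign calculation the proof is a translation exercise.
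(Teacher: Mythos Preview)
Your overall target — identifying $P'_a$ with $ev\circ m_a\circ\iota$ via the Viterbo isomorphism — is correct, and Step~1 matches the paper. But Steps~2--3 rest on a misidentification of the geometry. In the definition of $\phi^1_{L,L}(a,-)$ the asymptotic marker at the interior puncture is \emph{not} rotating: it is fixed to point toward the boundary output, and the one-parameter family comes from the \emph{position} of the interior marked point, not from a free $S^1$ on the marker. Hence the disk operation $\phi^1$ carries no BV-operator by itself, and its two nodal degenerations produce $\mu_2(\phi^0_L(a),-)$ and $\mu_2(-,\phi^0_L(a))$, i.e.\ left/right multiplication by $ev(a)$, not the terms $\Delta(-\ast a)$ and $(-)\ast\Delta(a)$ you claim. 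The direct term-by-term matching with the bracket formula therefore does not go through: no $\Delta$ is visible on the open-string side, and the correction terms $\pm\mu_2(c_L,-)$ are primitives for multiplication by $ev(a)$, not for anything involving $\Delta$.

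The paper supplies the missing idea: one constructs an open-closed map $\iota': HF^*(L,L)\to SH^*(T^*L)$ (via disks with interior \emph{output} and boundary on a varying cotangent fibre $T^*_xL$), verifies $r_L\circ\iota'=\id$, and then proves the factorization $P'_a = r_L\circ m'_a\circ\iota'$ with $m'_a$ the bracket on $SH^*$. The geometric content is a gluing/homotopy argument on annuli: gluing the $\phi^1$-disk to the annulus for $r_L\circ\iota'$, one shows that the one-parameter arc of interior-marked-point positions together with the arc coming from the correction terms $\pm\mu_2(c_L,-)$ concatenate into a \emph{loop} $\gamma_1\#\gamma_2$ of decorated annuli, and this loop is then matched with the three-punctured-sphere family (the third puncture running over the unit circle) defining the bracket on $SH^*$. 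Only after this lift to closed strings does the BV/bracket structure appear, whereupon Theorem~\ref{thm: Viterbo isomorphism} transports it to string topology. Your proposal is missing this passage through $SH^*$ via $\iota'$ and the loop-of-annuli argument, which is precisely what converts the open-string interval family into a genuine circle family.
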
  

%\red{prove Theorem D in more detail}

%
%Note that outside $D \cup C_{\lambda},$ $X_{H_{\lambda}}$ is tangent to the contact-type foliation, hence maximum principle \cite[Lemma 5.5]{KhovanovSeidel} applies still, to keep the corresponding solutions to Floer equations within $D \cup C_{\lambda}.$

\begin{proof}[Proof of Proposition \ref{thm: product map isomorphism} (sketch)]

One way to prove this result involves first showing that $P'_a = r_L \circ m'_a \circ \iota',$ where $\iota'$ is a homological inclusion map $HF^*(L,L) \cong HF_{n-\ast}(L,L) \to SH^*(T^*L),$  $m'_a: SH^*(T^*L) \to SH^*(T^*L)$ is the right symplectic homology bracket with $a \in SH^*(T^*L),$ given by the pair of pants product and the BV-operator $\Delta': SH^*(T^*L) \to SH^*(T^*L)[-1],$ and $r_L: SH^*(T^*L) \to HF^*(L,L)$ is the restriction map. Consequently, one shows that each of these maps is identified with $\iota, m_{\Phi(a)}, \Delta,$ and $ev,$ respectively under the isomorphisms $\Phi$ and $HF^*(L,L) \cong H^*(L) \cong H_{n-\ast}(L).$ Theorem \ref{thm: Viterbo isomorphism} takes care of identifying $m_{a'}$ and $m_{\Phi(a)},$ and $\Delta'$ with $\Delta.$ It is left to identify $\iota'$ with $\iota,$ and $r_{L}$ with $ev.$ The map $r_{L}$ in the latter pair is a suitable closed-open map, and the identification is carried out by following the isomorphism $\Phi$ from \cite{AbouzaidBook}. The map $\iota'$ defined below can again be seen to correspond to $\iota$ by following the construction in \cite{AbouzaidBook}. Alternatively, see \cite{AbbSchwarz,AbbSchwarz-corr}.

% wherein $H_{\lambda}$ is small. Consequently action estimates \cite{AbbSchwarz} readily apply.

To show that $P'_a = r_L \circ m'_a \circ \iota',$ we first observe that $\iota'$ is given \cite{AbouzaidBook} (see also \cite{AbbSchwarz}) by the moduli space of disks with one interior marked point, which is an output, and boundary conditions on a cotangent fibre $T^*_x L$ where $x,$ that we consider to be an input, is allowed to vary freely in $L.$ We choose the perturbation datum to coincide with $H_{\lambda} \otimes dt$ on a cylindrical end by the output, and with zero near the boundary. This is possible to achieve while keeping the conditions of the integrated maximum principle of Abouzaid and Seidel \cite{AbouzaidSeidel}, that keeps the corresponding solutions to the Floer equations within $D \cup C_{\lambda}.$  

Similarly, $r_L$ is given by the moduli space of disks with one interior marked point, which is an input, and one boundary marked point which is an output, with the asymptotic marker at the interior marked point pointing towards the boundary marked point, and with boundary conditions on $L.$ It is then easy to see by gluing, that $r_L \circ \iota'$ is given by the moduli space of annuli with boundary conditions on a cotangent fiber on one boundary component and $L$ on the other, with a boundary marked point, an output: that is in a concrete model, if the annulus is given by $[0,l] \times S^1,$ the boundary marked point is $(l,\zeta)$ for $\zeta \in S^1,$ and the boundary condition at $\{0\} \times S^1$ is on $T^*_x L$ for a varying $x.$ Moreover it is easy see, for example by a suitable homotopy of the Hamiltonian perturbation data and a pearly model for this operation, that on the homology level $r_L \circ \iota' = \id.$ Hence $P'_a =  P'_a \circ r_L \circ \iota'.$ We claim that it is therefore given by the compactified moduli space of annuli with one boundary component marked by a varying cotangent fibre, the other boundary component marked by $L$ and endowed with an unconstrained boundary marked point, an output, and one interior input marked point. The asymptotic marker at the interior marked point is pointing towards the output, and we use the interior marked point to plug in $a.$ 

This last claim requires further elaboration. We proceed as follows. First we glue the moduli space of disks describing $\phi^1_{L,L}(a,-)$ and the moduli space of disks describing $r_L \circ \iota'$ on the chain level. After fixing parametrization, we can fix the former to be given by the standard disk $D^2 \subset \C$ with two fixed marked points $\zeta_{-} = -1, \zeta_{+} = 1$ on the boundary, and an interior marked point $z = iy$ with $\mrm{Re}(z) = 0,$ and asymptotic marker pointing towards $\zeta_{+}$ along a hyperbolic geodesic. Hence the glued operation is given by an cylinder $[0,l] \times S^1$ with boundary conditions as above, and interior marked point constrained to an arc $\gamma_{1}$ with boundary $\{(l,\zeta_1)\}-\{(l,\zeta_0)\}$ on ${l} \times S^1$ separating $(l,0)$ and $\{0\} \times S^1,$ with asymptotic marker pointing towards $(l,0).$ Furthermore, we identify, up to chain-homotopy, the correction term $- \mu_2(c_{L},-) + (-1)^{(k-1) |x|} \mu_2(-,c_{L})$ in $\til{\phi}^1_{L,L,a},$ precomposed with the chain-level map $\psi$ giving $r_L \circ \iota',$ as the operator given by the same Riemann surface, now considered as an annulus with outer boundary $\{l\} \times S^1$, with the same boundary conditions and boundary marked points, except that the interior marked point, used to plug in $a,$ is now constrained to an arc $\gamma_2$ with boundary $-\{(l,\zeta_1)\}+\{(l,\zeta_0)\},$ which now does not separate $(l,0)$ and $\{0\} \times S^1$ (this is a result of parametric gluing applied to a one-parametric family of nodal annuli, consisting of an annulus and a disk related by a node $(l,\zeta),$ with the interior marked point in the disk, used to plug in $a,$ the asymptotic marker pointing at the node, interpolating between the surface for $(l,\zeta_1),$ and the one for $(l,\zeta_0),$ with $\zeta$ in the spherical arc $[\zeta_1,\zeta_0]$ not containing $0$). The asymptotic marker at the interior marked point still points towards $(l,0).$  Choosing $\gamma_1$ and $\gamma_2$ suitably, and considering a homotopy of the decorated Riemann surfaces corresponding to the loop $\gamma_1 \# \gamma_2,$ we obtain the claim. %\red{change to boundary + parametric gluing}

%with a suitable homotopy operator \red{describe more})

%Introducing a pearly model for this operation, coming from Morse functions on $L$ (there are two $s$-dependent Morse functions, $f_{-,s},$ $s \in (-\infty,0],$ $f_{+,s},$ $s \in [0,\infty),$ with $f_{\pm,s} \equiv f$ for $|s| \gg 1,$ for a fixed Morse function $f$ on $L$). We may achieve transversality by a generic choice of such functions, and since we may turn off $H_{\lambda}$ keeping compactness, we may assume to have {\em $J$-holomorphic} annuli, which must then be constant by exactness of $L,$ and the cotangent fibers (by Stokes' theorem). 

Finally, it is easy to see, by gluing again, that $r_L \circ m'_a \circ \iota'$ is given by a homotopic moduli space of decorated annuli. This is immediate by gluing from the description of the string bracket \cite[Section 2.5.1]{AbouzaidBook} as the moduli space of spheres with 3 marked points, where in the model of $S^2 = \C P^1 = \C \cup \{\infty\}$ we choose the marked points $z_0 = \infty,$ $z_1 = 0,$ and $z_2$ restricted to the unit circle $z_2 = e^{i\theta}$ for $\theta \in \R/(2\pi)\Z.$ The asymptotic markers at $z_0, z_1$ are chosen to be tangent to the imaginary axis, and pointing in the negative, resp. positive, direction. The asymptotic marker at each $z_2 = e^{i\theta}$ is chosen by identifying $S^2\setminus\{z_0,z_2\}$ biholomorphically with $\R \times S^1,$ so that the output asymptotic marker at $z_0$ correspond to the ray $(-\infty,0) \times \{0\}$ in the negative end $(-\infty,0) \times S^1,$ and choosing the input asymptotic marker at $z_2$ to correspond to the ray $(0,\infty) \times \{0\}$ in the positive end $(0,\infty) \times S^1.$
%The most interesting point is that $r_L \circ \Delta'$ is given by the moduli space of disks with one input interior marked point, and one output boundary marked point, with the asymptotic marker at the interior marker is now allowed to {\em vary freely}, and with boundary conditions on $L.$ By choosing reparametrized coordinates where the interior marked point is sent to $0 \in D$ in the disk $D,$ and the asymptotic marker to $1 \in T_0 D,$ and gluing it with the decorated Riemann surfaces giving $m'_a$ and $\iota',$ we deduce the required statement.
\end{proof}

Using similar arguments, we now prove Proposition \ref{prop: invariance of phi^1}. 

\begin{proof}[Proof of Proposition \ref{prop: invariance of phi^1}]
We first remark that in the case that $L_0,L_1$ are Hamiltonian isotopic to the zero section, the proof is just a matter of a standard homotopy argument. Therefore we sketch the proof in the case of arbitrary exact $L_0, L_1.$ 

Firstly consider the homotopy inverse quasi-isomorphisms $\psi: CF^*(L_0,L_1) \to CF^{*+j}(L,L)$ and $\psi': CF^*(L,L) \to CF^{*-j}(L_0,L_1)$ for $j = i_{L_0} - i_{L_1},$ \[\psi = \mu_2(y_1,-) \circ \mu_2(-,x_0),\] \[\psi' = \mu_2(x_1,-) \circ \mu_2(-,y_0),\] for classes $x_0 \in CF^{i_{L_0}}(L,L_0), y_0 \in CF^{-i_{L_0}}(L_0,L)$ and $x_1 \in CF^{i_{L_1}}(L,L_1), y_1 \in CF^{-i_{L_1}}(L_1,L)$ provided by Theorem \ref{thm: nearby Lag Fukaya}. We observe that it is enough to prove that the map \[\psi \circ \til{\phi}_{(L_0,L_1),a} \circ \psi': CF^*(L,L) \to CF^{*-1+k}(L,L)\] for arbitrarily chosen primitives $c'_{L_0},$ $c'_{L_1}$ for the $a$-equivariant structure on $L_0,L_1,$ is chain homotopic to \[P'_a + \mu_2(c,-)\] for a certain cycle $c \in CF^{-1+k}(L,L).$ Indeed in this case we can choose new primitives $c_{L_0} = c'_{L_0} + c_0,$ $c_{L_1} = c'_{L_1}$ for a cycle $c_0 \in CF^{k-1}(L_0,L_0)$ whose image under the quasi-isomorphism $\mu_2(y_0,-) \circ \mu_2(-,x_0): CF^*(L_0,L_0) \to CF^*(L,L)$ of Theorem \ref{thm: nearby Lag Fukaya} represents $[c]$ in homology. Therefore by an argument involving gluing and homotopies of families of decorated Riemann surfaces, and a relation between operations obtained by restricting the interior marked point to lie on a suitable collection of four arcs in the disk with five boundary inputs and one boundary output, we deduce, choosing an arbitrary primitive $c_L$ for the $a$-equivariant structure on $L$ that the operator $ \psi \circ \til{\phi}^1_{(L_0,L_1),a} \circ \psi'$ is chain-homotopic to the sum \[ \til{\phi}^1_{(L,L),a} + \mu_2( \mu_2(y_1,\til{\phi}^1_{(L,L_1),a}(x_1)) - \mu_2(\til{\phi}^1_{(L_0,L),a}(y_0),x_0) , - ),\] which is of the desired form. 
\end{proof}

\subsection{Proof of Theorem \ref{thm:Vit-prod}}\label{subsec: proof main}
%\red{degrees}

Let $a_1,\ldots, a_N \in H_{n-\ast}(\cl LL)^+$ be such that \begin{equation}\label{eq: product identity} P_{a_N} \circ \ldots \circ P_{a_1}([pt]) = [L].\end{equation} Let $\mu, e \in HF^*(L_0, L_1)$ be such that $\mu$ corresponds to $[pt]$ and $e$ corresponds to $[L]$ under the isomorphism  \[HF^{*+i_1-i_0}(L_0,L_1) \xrightarrow{\simeq} HF^{*-i_0}(L_0,L) \xrightarrow{\simeq} HF^*(L,L) \cong H_{n-\ast}(L),\] for suitable integers $i_0, i_1 \in \Z.$ In this case the spectral norm $\gamma(L_0,L_1)$ is given by \[\gamma(L_0,L_1) = c(\mu,L_0,L_1) - c(e, L_0, L_1).\] It is therefore sufficient to prove that there exists a constant $C(g,L; \bK)$ such that \[c(\mu,L_0,L_1) \leq c(e,L_0,L_1) + C(g,L;\bK).\] Let $a'_1 = \Phi^{-1}(a_1),\ldots, a'_N = \Phi^{-1}(a_N).$ Since $a_1,\ldots, a_N \in H_{n-\ast}(\cl LL)^+,$ we obtain that $L_0, L_1$ are $a_j$-equivariant for all $1 \leq j \leq N.$ In view of Proposition \ref{thm: product map isomorphism}, the identity \eqref{eq: product identity} corresponds to the identity \[P'_{(L_0,L_1),a'_N} \circ \ldots P'_{(L_0,L_1),a'_1} (e) = \mu.\] Set $C_j = \cA_{H_{\lambda}}(\til{a}'_j)$ for representatives $\til{a}'_j$ of $a'_j,$ and $x_j = P'_{(L_0,L_1),a'_j} \circ \ldots P'_{(L_0,L_1),a'_1} (e),$  $1\leq j \leq N,$ $x_0 = e.$ We lift these elements to the chain level as follows. Let $\til{x}_0 = \til{e} \in CF^*(L_0,L_1)$ be a chain representative of $e$ with \begin{equation}\label{eq: def of e tilde}\cA_{L_0,L_1,\cl D}(\til{e}) \leq c(e,L_0,L_1) + \epsilon\end{equation} (where $\cl D$ is chosen to be Hamiltonian-small). Then for $1 \leq j \leq N$ we set \[\til{x}_j = \til{\phi}^1_{(L_0,L_1),\til{a}'_j} \circ \ldots \circ \til{\phi}^1_{(L_0,L_1),\til{a}'_1}.\] In this situation we obtain by \eqref{eq: action of upgraded multiplication map} that for all $0 \leq j < N,$ \[\cA_{L_0,L_1,\cl D}(\til{x}_{j+1}) \leq \cA_{L_0,L_1,\cl D}(\til{x}_j) + \cA_{H_{\lambda}}(a_{j+1}) + 2\epsilon.\] Therefore by \eqref{eq: def of e tilde}, we obtain that \[\cA_{L_0,L_1,\cl D}(\til{x}_N) \leq c(e,L_0,L_1) + \sum_{j=1}^{N} C_j + 2(N+1)\epsilon.\] As $[\til{x}_N] = x_N = \mu,$ by definition of the spectral invariant we have \[c(\mu,L_0,L_1) \leq \cA_{L_0,L_1,\cl D}(\til{x}_N),\] and this finishes the proof.

\begin{rmk}
The inequality \[c(\mu,L_0,L_1) \leq c(e,L_0,L_1) + \sum_{j=1}^{N} C_j + 2(N+1)\epsilon\] finishing the proof can be optimized as follows. First we may choose $C_j = c([a_j], H_{\lambda}),$ to obtain \[c(\mu,L_0,L_1) \leq c(e,L_0,L_1) + \sum_{j=1}^{N} c(a'_j, H_{\lambda}) + 3(N+1)\epsilon.\] Furthermore, as $\lambda \to \infty,$ by definition of spectral invariants in filtered symplectic homology, we obtain that $c(a'_j, H_{\lambda}) \to c(a'_j, D,S).$ Therefore we can write \[c(\mu,L_0,L_1) \leq c(e,L_0,L_1) + \sum_{j=1}^{N} c(a'_j, D,S) + 3(N+1)\epsilon,\] and sending $\epsilon$ to $0,$ we finally get the bound \begin{equation}\label{eq: bound on spectral norm}
\gamma(L_0,L_1) \leq \sum_{j=1}^{N} c(a'_j, D,S).
\end{equation} 
%We explain in \ref{subsec: sharpness} that this bound is in fact sharp in certain cases.

\end{rmk}
%
%\subsection{On sharpness}
%
%We claim that the bound \eqref{eq: bound on spectral norm} is sharp in the case of $L = T^n$ with the standard Riemannian metric.
%It is easy to construct a Hamiltonian deformation of the zero

\subsection{Proof of Proposition \ref{prop: prod}} \label{subsec: proof prod} 

%\red{modify!}

Let $L,$ $L'$ be string point-invertible. We will show that so is $L \times L'.$ Let $a_1,\ldots,a_N,$ and $a'_1,\ldots,a'_{N'}$ be sequences exhibiting the fact that $L,$ respectively $L',$ are string point-invertible. Consider $b_k = a_k \otimes \iota([L']),$ for $1 \leq k \leq N,$ and $b_k = \iota([L]) \otimes a'_{k- N},$ for $N < k \leq N+N'.$ We claim that the sequence $b_1,\ldots, b_{N+N'},$ up to sign, is the required one for $L \times L'.$ Since it is indeed enough to consider the question up to signs, we will ignore signs coming from the Koszul rule for tensor products. Note that by K\"{u}nneth theorem $H_*(L \times L') \cong H_*(L) \otimes H_*(L')$ and $H_*(\cl LL \times \cl LL') \cong H_*(\cl LL) \otimes H_*(\cl LL').$ Furthermore, under changing the grading to $*-n$ everywhere, these splittings are tensor products of graded algebras. Moreover the maps $\iota$ and $ev$ commute with this tensor product decomposition. The BV-operator behaves in the following way: for homogeneous elements $x \in H_*(\cl LL),$ $x' \in H_*(\cl LL'),$ %\red{are signs correct? take care with $\ast$ vs $n-\ast$, also with signs for swaps in tensor product} 
\[\Delta_{L \times L'}(x \otimes x') = \Delta_L(x) \otimes x' + (-1)^{|x|} x \otimes \Delta_{L'}(x').\] This implies that for $a \in H_*(L),$ $a' \in H_*(L'),$ \[m_{a \otimes a'} (x \otimes x') = \pm m_a(x) \otimes a' \ast x' \pm  a \ast x \otimes m_{a'}(x').\]

Observe that $[pt_{L \times L'}] = [pt_L] \otimes [pt_{L'}],$ $[L \times L'] = [L] \otimes [L'],$ and furthermore for $a' = \iota([L']),$ $m_{a'} = 0.$ Therefore \[m_{b_1} \iota([pt_L] \otimes [pt_{L'}]) = m_{a_1 \otimes \iota([L'])} \iota([pt_L] \otimes [pt_{L'}]) = \pm m_{a_1}(\iota([pt_L])) \otimes \iota([pt_{L'}]).\]

%We compute \[b_1 \ast \iota([pt_L] \otimes [pt_{L'}]) = (a_1 \otimes [L']) \ast \iota([pt_L]) \otimes \iota([pt_{L'}]) = \pm (a_1 \ast \iota([pt_L])) \otimes \iota([pt_{L'}]).\] Since $\Delta \circ \iota = 0,$ we therefore obtain \[\Delta((a_1 \ast \iota([pt_L])) \otimes \iota([pt_{L'}])) = \Delta((a_1 \ast \iota([pt_L]))) \otimes \iota([pt_{L'}]).\] 

Finally, since $ev$ is an algebra map, and it commutes with the K\"{u}nneth decompositions, we obtain \[P_{b_1}([pt_{L\times L'}]) = \pm P_{a_1}([pt_L]) \otimes [pt_{L'}].\] The same argument shows that for all $1 \leq k \leq N,$ \[P_{b_k} \circ \ldots \circ P_{b_1}([pt_{L\times L'}]) = \pm P_{a_k} \circ \ldots \circ P_{a_1}([pt_L]) \otimes [pt_{L'}].\] In particular, for $k=N,$ \begin{equation}\label{eq: tensor 1} P_{b_N} \circ \ldots \circ P_{b_1}([pt_{L\times L'}]) = \pm [L] \otimes [pt_{L'}].\end{equation} 

%Now we calculate $P_{b_{N+1}} ([L] \otimes [pt_{L'}]).$ First \[(\iota([L]) \otimes a'_1) \ast (\iota([L]) \otimes \iota([pt_{L'}])) = \pm \iota([L]) \otimes (a'_1 \ast \iota([pt_{L'}])). \] Consequently \[\Delta(\iota([L]) \otimes (a'_1 \ast \iota([pt_{L'}]))) = \pm \iota([L]) \otimes \Delta (a'_1 \ast \iota([pt_{L'}])),\] whence \[P_{b_{N+1}} ([L] \otimes [pt_{L'}]) = \pm [L] \otimes P_{a'_1}([pt_{L'}]).\] 

Similarly, we obtain that for all $1 \leq k \leq N',$ \[P_{b_{N+k}} \circ \ldots \circ P_{b_{N+1}} ([L] \otimes [pt_{L'}]) = \pm [L] \otimes P_{a'_k} \circ \ldots \circ P_{a'_1}([pt_{L'}]).\] In particular, for $k = N',$ \begin{equation}\label{eq: tensor 2}P_{b_{N+N'}} \circ \ldots \circ P_{b_{N+1}} ([L] \otimes [pt_{L'}]) = \pm [L] \otimes [L'].\end{equation}
Hence, by \eqref{eq: tensor 1} and \eqref{eq: tensor 2}, we obtain \[P_{b_{N+N'}} \circ \ldots \circ P_{b_{1}} ([pt_{L \times L'}]) = \pm [L \times L'].\] If necessary, changing the sign of $b_{N+N'},$ we finish the proof. 

%\section{Auxiliary proofs.}

%\subsection{Proof of Theorem \ref{thm: gamma closed}}

%\section{Discussion}

%Hence $P_{b_1}([pt_{L \times L'}]) = ev(\Delta(b_1 \ast \iota([pt_L]) \otimes \iota([pt]))$

\section*{Acknowledgements}
I thank Mohammed Abouzaid for suggesting to look at equivariant Lagrangians in the quantitative context, and Leonid Polterovich for motivating discussions on applying symplectic cohomology. I thank Paul Biran, Lev Buhovsky, Octav Cornea, Michael Entov, Helmut Hofer, Yanki Lekili, Alexandru Oancea, James Pascaleff, Dmitry Tonkonog, and Sara Venkatesh for useful conversations. This work was supported by an NSERC Discovery Grant and by the Fonds de recherche du Qu\'{e}bec - Nature et technologies.

\bibliography{bibliographyVP}

\end{document}